\newcommand{\cA}{\mathcal{A}}
\newcommand{\HH}{\mathcal{H}}
\newcommand{\II}{\mathcal{I}}
\newcommand{\ex}{\mathrm{ex}}
\newcommand{\cP}{\mathcal{P}}
\newcommand{\cS}{\mathcal{S}}
\newcommand{\SF}{\mathrm{SF}}
\newcommand{\ZZ}{\mathbb{Z}}
\newcommand{\np}{n'}
\renewcommand{\le}{\leqslant}
\renewcommand{\ge}{\geqslant}
\newcommand{\eps}{\varepsilon}
\newtheorem{thm}{Theorem}
\newtheorem{lemma}[thm]{Lemma}
\newtheorem{prop}[thm]{Proposition}
\theoremstyle{definition}
\newtheorem*{dfn}{Definition}
\newtheorem*{alg}{The Algorithm}
\newtheorem*{claim}{Claim}
\title{Counting independent sets in graphs}
\author{Wojciech Samotij}
\address{School of Mathematical Sciences, Tel Aviv University, Tel Aviv 69978, Israel; and Trinity College, Cambridge CB2 1TQ, UK}
\email{samotij@post.tau.ac.il}
\thanks{Research supported in part by a grant from the Israel Science Foundation}
\date{\today}
\begin{document}

\maketitle

\begin{abstract}
  In this short survey article, we present an elementary, yet quite powerful, method of enumerating independent sets in graphs. This method was first employed more than three decades ago by Kleitman and Winston and has subsequently been used numerous times by many researchers in various contexts. Our presentation of the method is illustrated with several applications of it to `real-life' combinatorial problems. In particular, we derive bounds on the number of independent sets in regular graphs, sum-free subsets of $\{1, \ldots, n\}$, and $C_4$-free graphs and give a short proof of an analogue of Roth's theorem on $3$-term arithmetic progressions in sparse random sets of integers which was originally formulated and proved by Kohayakawa, \L uczak, and R\"odl.
\end{abstract}

\section{Introduction}
\label{sec:introduction}

Many well-studied problems in combinatorics concern characterising discrete structures that~satisfy certain `local' constraints. For example, the~celebrated theorem of~Szemer\'edi~\cite{Sz75} gives an~upper bound on the~maximum size of~a~subset of~the~first $n$ integers which does not contain an~arithmetic progression of~a~fixed length~$k$. To~give another example, the~archetypal problem studied in~extremal graph theory, dating back to~the~work of Mantel~\cite{Ma07} and Tur\'an~\cite{Tu41}, is that of~characterising graphs which do not contain a~fixed graph $H$ as~a~subgraph.

Problems of this type fall into the following general framework. We are given a~finite set~$V$ and a~collection~$\HH$ of~subsets of~$V$. What can be said about sets~$I \subseteq V$ that do not contain any~member of~$\HH$? Such a~collection~$\HH$ is often called a~\emph{hypergraph} with vertex set~$V$, members of~$\HH$ are termed \emph{edges}, and any set~$I \subseteq V$ that contains no edge is called an~\emph{independent set}. In view of this, one might say that a~large part of~combinatorics is concerned with studying independent sets in various hypergraphs. For instance, in the first example from the previous paragraph, $V$ is the set $\{1, \ldots, n\}$ and $\HH$ is the collection of all $k$-term arithmetic progressions contained in $V$; stated in~this language, Szemer\'edi's theorem says that for every positive constant $\delta$, every independent set in $\HH$ has fewer than $\delta n$ elements, provided that $n$ is sufficiently large. In the second example, $V$ is the edge set of a complete graph on a given set of $n$ vertices and $\HH$ is the family of all $\binom{n}{|V(H)|}$~sets of $|E(H)|$ edges that form a copy of $H$ in the complete graph; in this notation, if $H$ is a clique with $k+1$ vertices, then Tur\'an's theorem says that the largest independent sets in $\HH$ are precisely the edge sets of the complete balanced $k$-partite subgraphs of the complete graph with edge set $V$ and the well-known theorem of Kolaitis, Pr\"omel, and Rothschild~\cite{KoPrRo87} states that almost all independent sets of $\HH$ are $k$-partite, that is, the number $i^*(\HH)$ of independent sets in $\HH$ that are not the edge sets of $k$-partite subgraphs of the complete graph with edge set $V$ satisfies $i^*(\HH) / i(\HH) \to 0$ as $n \to \infty$.

For a~hypergraph~$\HH$, let~$\II(\HH)$ denote the~family of all independent sets in~$\HH$, let $i(\HH) = |\II(\HH)|$, and let $\alpha(\HH)$ be the largest cardinality of an element of~$\II(\HH)$, usually called the~\emph{independence number} of~$\HH$. There are two natural problems that one usually poses about a specific hypergraph~$\HH$:
\begin{enumerate}[(i)]
\item\label{item:q1}
  Determine $\alpha(\HH)$ and describe all $I \in \II(\HH)$ with $\alpha(\HH)$ elements.
\item\label{item:q2}
  Estimate $i(\HH)$ and describe a `typical' member of $\II(\HH)$.
\end{enumerate}
Let us remark here that providing a precise characterisation of a typical element of $\II(\HH)$ usually yields a~precise estimate for~$i(\HH)$.

An~apparent connection between problems~(\ref{item:q1}) and~(\ref{item:q2}) may be easily observed in the following two inequalities, which are trivial consequences of the above definitions and the fact that the family~$\II(\HH)$ is closed under taking subsets:
\begin{equation}
  \label{eq:alpha-i}
  2^{\alpha(\HH)} \le i(\HH) \le \sum_{m = 0}^{\alpha(\HH)} \binom{|V(\HH)|}{m}.
\end{equation}
Note that, unless $\alpha(\HH)$ is very close to~$|V(\HH)|$, the~lower and upper bounds on~$i(\HH)$ given in~\eqref{eq:alpha-i} are quite far apart. Since for many interesting hypergraphs $\HH$ this naive lower bound is actually fairly close to being best possible, the~efforts of~many researchers have been focused on improving the upper bound.

In this short survey article, we present an elementary, yet very powerful, method for~proving stronger upper bounds in the~case when all edges of~$\HH$ have size two, that is, when $\HH$ is a~graph. This method was first described more than three decades ago by~Kleitman and Winston, who used it to obtain upper bounds on the number of lattices\footnote{A lattice is a partially ordered set in which every two elements have a supremum and an infimum.}~\cite{KlWi80} and graphs without cycles of~length four~\cite{KlWi82}. Variations of this method were subsequently rediscovered by several researchers, most notably by Sapozhenko, in the context of enumerating independent sets in regular graphs~\cite{Al91, Sa01} and sum-free sets in abelian groups~\cite{Al91, LeLuSc01, Sa02}. We shall illustrate our presentation of this method with several applications of it to `real-life' combinatorial problems. We would like to stress here that none of the results or proof techniques presented here are new, but we hope that there is some value in seeing them next to one another.

\section{The Kleitman--Winston algorithm}
\label{sec:KW}

Suppose that we are given an arbitrary graph $G$ with $n$ vertices. Our goal is to give an upper bound on $i(G)$, the number of independent sets in $G$. The idea of Kleitman and Winston was to devise an algorithm that, given a particular independent set $I \in \II(G)$, would encode $I$ in an invertible way. Crucially, the encoding should be performed in a way which makes it easy to estimate the total number of outputs of the algorithm. Since for every invertible encoding, the total number of outputs is precisely $i(G)$, in this way one could derive an upper bound on this quantity.

The crucial idea of Kleitman and Winston was to consider the vertices of $G$ ordered according to their degrees and encode each independent set $I$ as a sequence of positions of the elements of $I$ in that ordering. We make this precise below.

\begin{dfn}
  Let $G$ be a graph and fix an arbitrary total order on $V(G)$. For every $A \subseteq V(G)$, the \emph{max-degree ordering} of $A$ is the ordering $(v_1, \ldots, v_{|A|})$ of all elements of $A$, where for each $j \in \{1, \ldots, |A|\}$, $v_j$ is the maximum-degree vertex in the subgraph of $G$ induced by $A \setminus \{v_1, \ldots, v_{j-1}\}$; ties are broken by giving preference to vertices that come earlier in the fixed total order on $V(G)$.
\end{dfn}

\begin{alg}
  Suppose that a graph $G$, an $I \in \II(G)$, and an integer $q \le |I|$ are given. Set~$A = V(G)$ and $S = \emptyset$. For $s = 1, \ldots, q$, do the following:
  \begin{enumerate}[(a)]
  \item
    Let $(v_1, \ldots, v_{|A|})$ be the max-degree ordering of $A$.
  \item
    Let $j_s$ be the minimal index $j$ such that $v_j \in I$.
  \item\label{item:v}
    Move $v_{j_s}$ from $A$ to $S$.
  \item\label{item:before-v}
    Delete $v_1, \ldots, v_{j_s-1}$ from $A$.
  \item\label{item:Nv}
    Delete $N_G(v_{j_s}) \cap A$ from $A$.
  \end{enumerate}
  Output $(j_1, \ldots, j_q)$ and $A \cap I$.
\end{alg}

For each output sequence $(j_1, \ldots, j_q)$ and every $s \in \{1, \ldots, q\}$, denote by $A(j_1, \ldots, j_s)$ and $S(j_1, \ldots, j_s)$ the sets $A$ and $S$ at the end of the $s$th iteration of the algorithm (run on some input~$I$ that produces this particular sequence $(j_1, \ldots, j_q)$), respectively. Observe that these definitions do not depend on the choice of $I$ as the sequence $(j_1, \ldots, j_q)$ uniquely determines how the sets $S$ and $A$ evolve throughout the algorithm. More precisely, if running the algorithm on two inputs $I, I' \in \II(G)$ produces the same sequence $(j_1, \ldots, j_q)$, then both these executions will also yield the same sets $S$ and $A$. Indeed, all the modifications of the sets $S$ and $A$ in the~$s$th iteration of the algorithm depend solely on $j_s$. 

Note crucially that $S(j_1, \ldots, j_s) \subseteq I$ and $I \setminus S(j_1, \ldots, j_s) \subseteq A(j_1, \ldots, j_s)$ for every $s$. Indeed, by the minimality of $j_s$ and the assumption that $I$ is independent, the only vertices of $I$ that are deleted from $A$ are moved to~$S$. It follows that one may recover the set $I$ from the output of the algorithm, as $I = S(j_1, \ldots, j_q) \cup (A(j_1, \ldots, j_q) \cap I)$. We also note for future reference that the sequence $(j_1, \ldots, j_q)$ can be recovered from the set $S(j_1, \ldots, j_q)$. Indeed, if running the algorithm on some input $I \in \II(G)$ produces a sequence $(j_1, \ldots, j_q)$ and $S = S(j_1, \ldots, j_q)$, then the same sequence will be produced by running the algorithm with $I$ replaced by~$S$. Finally, let us observe that $j_1 + \ldots + j_q \le |V(G)| - |A(j_1, \ldots, j_q)|$, as in steps (\ref{item:v}) and (\ref{item:before-v}) of the $s$th iteration of the main loop, we removed from $A$ some $j_s$ vertices.

Let $i(G,m)$ be the number of independent sets in $G$ that have precisely $m$ elements. The above observations readily imply that for every $m$ and $q$ with $m \ge q$,
\begin{equation}
  \label{eq:iG-q-m}
  i(G,m) \le \sum_{(j_s)} i\big(G[A(j_1, \ldots, j_q)], m-q\big) \le \sum_{(j_s)} \binom{|A(j_1, \ldots, j_q)|}{m-q},
\end{equation}
where the above sums range over all output sequences $(j_1, \ldots, j_q)$. In particular, letting $n = |V(G)|$,
\begin{equation}
  \label{eq:iG-q}
  i(G) \le \sum_{m=0}^{q-1} \binom{n}{m} + \sum_{(j_s)} i\big(G[A(j_1, \ldots, j_q)]\big) \le \sum_{m=0}^{q-1} \binom{n}{m} + \sum_{(j_s)} 2^{|A(j_1, \ldots, j_q)|}.
\end{equation}

In view of~\eqref{eq:iG-q-m} and~\eqref{eq:iG-q}, it is in our interest to make the set $A(j_1, \ldots, j_q)$ as small as possible, uniformly for all values of $(j_1, \ldots, j_q)$. This is why we consider the vertices of $A$ listed according to the max-degree ordering. (An attentive reader might have already noticed that this particular ordering maximises $\deg_G(v_{j_s}, A)$ in each iteration of the algorithm.) Suppose that we are at the $s$th iteration of the main loop of the algorithm and let $A' = A \setminus \{v_1, \ldots, v_{j_s-1}\}$, where $A$ is as at the start of this iteration, that is, $A = A(j_1, \ldots, j_{s-1})$. By the definition of the max-degree ordering,
\[
|N_G(v_{j_s}) \cap A'| = \max_{v \in A'} \deg_G(v,A') \ge \frac{2e_G(A')}{|A'|}.
\]
In particular, if $e_G(A') = \beta \binom{|A'|}{2}$, then the right-hand side of the above inequality is $\beta (|A'| - 1)$. Consequently, the number of vertices that are removed from $A$ during the $s$th iteration of the main loop of the algorithm is at least $j_s  + \beta (|A'|-1)$, which is at least $\beta |A|$, as $|A'| - 1 = |A| - j_s$ and~$\beta \le 1$. In other words, as long as the density of the subgraph induced by the set $A$ exceeds some~$\beta$, each iteration of the main loop of the algorithm shrinks $A$ by a factor of at most $1 - \beta$.

The following two lemmas, which are both implicit in the work of Kleitman and Winston, summarise the above discussion. The first lemma gives a simple bound on the number of independent sets of a given size in a graph which satisfies a certain local density condition. The exact statement of this lemma is taken from~\cite{KoLeRoSa14}. The second lemma characterises the family of all independent sets in such a locally dense graph. The statement of this lemma is inspired by the statement of the~main result of~\cite{BaMoSa14}.

\begin{lemma}
  \label{lemma:KW-basic}
  Let $G$ be a graph on $n$ vertices and assume that an integer $q$ and reals $R$ and $\beta \in [0,1]$ satisfy
  \begin{equation}
    \label{eq:beta-q-R}
    R \ge e^{-\beta q} n.
  \end{equation}
  Suppose that the number of edges induced in $G$ by every set $U \subseteq V(G)$ with $|U| \ge R$ satisfies
  \begin{equation}
    \label{eq:eGU}
    e_G(U) \ge \beta \binom{|U|}{2}.
  \end{equation}
  Then, for every integer $m$ with $m \ge q$,
  \begin{equation}
    \label{eq:iG-m-bound}
    i(G,m) \le \binom{n}{q} \binom{R}{m-q}.
  \end{equation}
\end{lemma}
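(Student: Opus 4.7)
The plan is to apply the bound \eqref{eq:iG-q-m} produced by the Kleitman--Winston algorithm and to control both factors on its right-hand side. For the number of output sequences $(j_1, \ldots, j_q)$, I invoke the observation from the text that such a sequence is uniquely determined by the $q$-element set $S(j_1, \ldots, j_q) \subseteq V(G)$, and hence the number of distinct outputs is at most $\binom{n}{q}$.

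The main work is to show that $|A(j_1, \ldots, j_q)| \le R$ for every output sequence. Set $a_s = |A(j_1, \ldots, j_s)|$, so that $a_0 = n$, and suppose for contradiction that $a_q > R$. Since $A$ only shrinks across iterations, $a_s > R$ for every $s \in \{0, 1, \ldots, q\}$. Consider iteration $s$, and let $A' = A \setminus \{v_1, \ldots, v_{j_s - 1}\}$ as in the discussion preceding the lemma. Since $v_{j_s} \in A'$ but $v_{j_s} \notin A(j_1, \ldots, j_s)$, we have $|A'| \ge a_s + 1 > R$, so the density hypothesis \eqref{eq:eGU} applies to $U = A'$ and yields $e_G(A') \ge \beta \binom{|A'|}{2}$. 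The max-degree computation quoted in the text then shows that iteration $s$ removes at least $\beta a_{s-1}$ vertices from $A$, whence $a_s \le (1 - \beta) a_{s-1}$. Iterating this recursion and using $1 - \beta \le e^{-\beta}$ together with hypothesis \eqref{eq:beta-q-R} gives
\[
a_q \,\le\, (1 - \beta)^q n \,\le\, e^{-\beta q} n \,\le\, R,
\]
which contradicts $a_q > R$. Hence $|A(j_1, \ldots, j_q)| \le R$ always.

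Substituting both bounds into \eqref{eq:iG-q-m} gives
\[
i(G, m) \,\le\, \sum_{(j_s)} \binom{|A(j_1, \ldots, j_q)|}{m - q} \,\le\, \binom{n}{q} \binom{R}{m - q},
\]
which is the desired inequality \eqref{eq:iG-m-bound}. The one delicate point is verifying that the density hypothesis is applicable at every iteration: this requires $|A'| \ge R$, not merely $|A| \ge R$, which is why the argument is phrased as a contradiction starting from $a_q > R$ rather than as a direct analysis of what happens once $A$ drops below $R$. The inequality $|A'| \ge a_s + 1$, which uses the fact that $v_{j_s}$ itself is removed in step~(\ref{item:v}), is what makes the bookkeeping go through cleanly.
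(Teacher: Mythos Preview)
Your proof is correct and follows essentially the same approach as the paper's: a contradiction argument showing each iteration shrinks $A$ by a factor $1-\beta$ whenever the current $A'$ still has more than $R$ elements, combined with a $\binom{n}{q}$ bound on the number of output sequences. The only cosmetic difference is that you bound the number of outputs via the bijection between sequences $(j_1,\ldots,j_q)$ and $q$-element sets $S(j_1,\ldots,j_q)\subseteq V(G)$, whereas the paper counts directly the positive-integer sequences with $j_1+\cdots+j_q\le n$; both give the same $\binom{n}{q}$.
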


\begin{proof}
  Since there are exactly $\binom{n}{q}$ sequences $(j_1, \ldots, j_q)$ satisfying $j_1 + \ldots + j_q \le n$ and $j_s \ge 1$ for each $s$, the sum in the right-hand side of~\eqref{eq:iG-q-m} has at most $\binom{n}{q}$ terms. Therefore, it suffices to show that for each sequence $(j_1, \ldots, j_q)$ that is outputted by the algorithm, the set $A(j_1, \ldots, j_q)$ has at most $R$ elements. If this were not the case, then there would be some sequence $(j_1, \ldots, j_q)$ such that for each $s \in \{1, \ldots, q\}$, the set $A \setminus \{v_1, \ldots, v_{j_s-1}\}$ in the $s$th iteration of the main loop of the algorithm (run on some input that results in this particular sequence) would have more than $R$ elements and therefore induce in $G$ a subgraph with edge density at least $\beta$. It follows from our discussion that each of the $q$ iterations would shrink the set~$A$ by a~factor of~at~most $1-\beta$. Since $|A| = |V(G)| = n$ at the~start of the~algorithm, then, by~\eqref{eq:beta-q-R},
  \[
  |A(j_1, \ldots, j_q)| \le (1-\beta)^q n \le e^{-\beta q}n \le R,
  \]
  a contradiction.
\end{proof}

\begin{lemma}
  \label{lemma:KW-containers}
  Let $G$ be a graph on $n$ vertices and assume that an integer $q$ and reals $R$ and $D$ satisfy
  \begin{equation}
    \label{eq:D-q-R}
    R + q D \ge n.
  \end{equation}
  Suppose that the number of edges induced in $G$ by every set $U \subseteq V(G)$ with $|U| \ge R$ satisfies
  \begin{equation}
    \label{eq:eGU-D}
    2e_G(U) \ge D|U|.
  \end{equation}
  Then there exists a collection $\cS$ of $q$-element subsets of $V(G)$ and two mappings $g \colon \II(G) \to \cS$ and $f\colon \cS \to \cP(V(G))$ such that $|f(S)| \le R$ for each $S \in \cS$ and $g(I) \subseteq I \subseteq f(g(I)) \cup g(I)$ for every $I \in \II(G)$ with at least $q$ elements.
\end{lemma}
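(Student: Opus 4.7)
The plan is to run the Kleitman--Winston algorithm with parameter $q$ on every independent set $I$ with $|I| \ge q$, producing a sequence $(j_1,\ldots,j_q)$ together with the sets $S(j_1,\ldots,j_q)$ and $A(j_1,\ldots,j_q)$. I would define $g(I) := S(j_1,\ldots,j_q)$ for such $I$ (and arbitrarily for $I$ with $|I| < q$), let $\cS$ be the image of $g$, and, exploiting the observation made in the paper that the sequence $(j_1,\ldots,j_q)$ is determined by the set $S$ (since rerunning the algorithm on the independent set $S$ itself reproduces it), define $f(S) := A(j_1,\ldots,j_q)$. Each $|g(I)| = q$ by construction, and the required inclusion $g(I) \subseteq I \subseteq f(g(I)) \cup g(I)$ whenever $|I| \ge q$ is exactly the ``$S \subseteq I$ and $I \setminus S \subseteq A$'' observation stated right after the algorithm.

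The substantive claim is therefore the bound $|f(S)| \le R$. To establish it I would analyse a single iteration. Writing $A_{s-1}$ for $A$ at the start of iteration $s$ and $A'_s := A_{s-1} \setminus \{v_1,\ldots,v_{j_s-1}\}$, the max-degree ordering guarantees that $v_{j_s}$ has maximum degree in $G[A'_s]$, so
\[
\deg_{G[A'_s]}(v_{j_s}) \ge \frac{2 e_G(A'_s)}{|A'_s|}.
\]
If $|A'_s| \ge R$, then hypothesis \eqref{eq:eGU-D} gives $\deg_{G[A'_s]}(v_{j_s}) \ge D$, and iteration $s$ removes at least $j_s + D$ vertices from $A$. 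If instead $|A'_s| < R$, then $|A_s| \le |A'_s| - 1 < R$, and since $|A|$ can only decrease in later iterations this inequality persists, forcing $|A(j_1,\ldots,j_q)| < R$.

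Combining the two alternatives finishes the argument. If every iteration falls into the first case, then
\[
|A(j_1,\ldots,j_q)| \le n - \sum_{s=1}^{q}(j_s + D) \le n - q - qD \le R,
\]
where the last inequality is hypothesis \eqref{eq:D-q-R}; otherwise some iteration falls into the second case and we have already concluded $|A(j_1,\ldots,j_q)| < R$. Either way $|f(S)| \le R$. The one thing to be careful about is that the algorithm really completes all $q$ iterations: this holds because each iteration moves precisely one element of $I$ into $S$ (by the minimality of $j_s$ and the independence of $I$, no other element of $I$ is ever deleted), so $|A \cap I|$ decreases by exactly one per step and the initial $|I| \ge q$ suffices. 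Beyond this bookkeeping there is no real obstacle; the argument is morally the same as in Lemma~\ref{lemma:KW-basic}, with the multiplicative shrinkage factor $1-\beta$ replaced by the additive decrement $D$.
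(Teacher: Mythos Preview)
Your proposal is correct and follows essentially the same approach as the paper's proof: both run the algorithm to define $g$, $f$, and $\cS$, invoke the post-algorithm observations for the inclusions and for the well-definedness of $f$, and then verify $|A(j_1,\ldots,j_q)|\le R$ by using~\eqref{eq:eGU-D} to guarantee that each iteration drops at least $D$ vertices beyond the $j_s$ already removed. The only cosmetic difference is that the paper phrases the final step as a contradiction (if $|A_q|>R$ then every $A'_s$ has more than $R$ elements, forcing $|A_q|\le n-qD\le R$), whereas you split into the two cases $|A'_s|\ge R$ versus $|A'_s|<R$ directly; these are logically equivalent, and your extra remark about the algorithm completing all $q$ iterations is a welcome bit of bookkeeping that the paper leaves implicit.
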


\begin{proof}
  We define the mappings $f$ and $g$ and the family $\cS$ as follows. We simply run the algorithm with input $I$ for each $I \in \II(G)$ with at least $q$ elements and let $g(I)$ and $f(g(I))$ be the final sets $S$ and $A$, respectively. Moreover, we let $\cS$ be the family of all such $S$, that is, the set of values taken by $g$. The discussion in the paragraph following the description of the algorithm should convince us that this is a valid definition of $f$, that $g(I) \subseteq I \subseteq f(g(I)) \cup g(I)$ for each $I$ as above, and that $\cS$ consists solely of $q$-element subsets of $V(G)$. It suffices to check that $|f(g(I))| \le R$ for each such $I$. If this were not the case, then there would be some sequence $(j_1, \ldots, j_q)$ such that for each $s \in \{1, \ldots, q\}$, the set $A \setminus \{v_1, \ldots, v_{j_s-1}\}$ in the $s$th iteration of the main loop of the algorithm (run on an input $I$ that generates this sequence) would have more than $R$ elements and therefore induce in $G$ a subgraph with average degree at least $D$. But then, each of the $q$ iterations would remove from $A$ at least $D+1$ vertices. Since $|A| = |V(G)| = n$ at the start of the algorithm, then by~\eqref{eq:D-q-R},
  \[
  |A(j_1, \ldots, j_q)| \le n - D q \le R,  
  \]
  a contradiction.
\end{proof}

Before we close this section, let us make several final remarks. First, the conclusion of Lemma~\ref{lemma:KW-containers} is stronger than the conclusion of Lemma~\ref{lemma:KW-basic}. This is simply because the existence of $f$ and $g$ as in the statement of the second lemma imply the bound on $i(G,m)$ asserted by the first lemma. Moreover, it should be clear from the proofs that the assumptions of the two lemmas are `interchangeable' in the following sense. If a graph $G$ satisfies the assumptions of Lemma~\ref{lemma:KW-basic} with some $q$, $R$, and $\beta$, then the conclusion of Lemma~\ref{lemma:KW-containers} holds for $G$ with the same $q$ and $R$; and vice-versa, if a graph $G$ satisfies the assumptions of Lemma~\ref{lemma:KW-containers} with some $q$, $R$, and $D$, then the conclusion of Lemma~\ref{lemma:KW-basic} holds for~$G$ with the same $q$ and $R$. (The latter statement is redundant because, as we have already noted above, the conclusion of Lemma~\ref{lemma:KW-containers} is stronger than the conclusion of Lemma~\ref{lemma:KW-basic}.)

\section{Applications}
\label{sec:applications}

\subsection{Independent sets in regular graphs}
\label{sec:indep-sets-reg-graphs}

During a number theory conference at Banff in~1988, Granville conjectured (see~\cite{Al91}) that an $n$-vertex $d$-regular graph can have no more than $2^{(1+o(1))\frac{n}{2}}$ independent sets, where $o(1)$ is some function that tends to $0$ as $d \to \infty$. A few years later, this was shown to be true by Alon~\cite{Al91}, who proved that in fact
\[
i(G) \le 2^{(1+O(d^{-0.1}))\frac{n}{2}}
\]
for every $n$-vertex $d$-regular graph $G$. As our first application of Lemma~\ref{lemma:KW-basic}, we derive a somewhat stronger estimate, which was obtained several years later by Sapozhenko~\cite{Sa01}, using arguments very similar to those presented in Section~\ref{sec:KW}.

\begin{thm}[{\cite{Sa01}}]
  \label{thm:Sapozhenko}
  There is an absolute constant $C$ such that every $n$-vertex $d$-regular graph $G$ satisfies
  \[
  i(G) \le 2^{\left(1 + C \sqrt{\frac{\log d}{d}}\right)\frac{n}{2}}.
  \]
\end{thm}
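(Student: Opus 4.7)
The plan is to apply Lemma~\ref{lemma:KW-basic} once to $G$ with carefully chosen parameters and then read off the bound on $i(G)$ from inequality~\eqref{eq:iG-q}. Since $i(G) \le 2^n$ holds trivially, we may assume throughout that $d$ is large enough for $C\sqrt{\log d/d}$ to be at most $1$.

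First I would establish a local density estimate. Since $G$ is $d$-regular, counting edges leaving $U$ gives $2 e_G(U) \ge d|U| - d(n - |U|) = d(2|U| - n)$ for every $U \subseteq V(G)$. Setting $R := (1 + \eta) n / 2$ for a parameter $\eta \in (0,1)$ to be tuned later, I would obtain $e_G(U) \ge d \eta n / 2 \ge (d\eta/n) \binom{|U|}{2}$ whenever $|U| \ge R$, so hypothesis~\eqref{eq:eGU} holds with $\beta := d\eta/n$. The smallest integer $q$ satisfying~\eqref{eq:beta-q-R} is then of order $n/(d\eta)$; concretely $q := \lceil (\log 2)\, n / (d\eta) \rceil$ works, because $n/R \le 2$. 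Lemma~\ref{lemma:KW-basic} now applies, and combined with the crude bound $\sum_{m < q} \binom{n}{m} \le q \binom{n}{q}$ (valid since $q \ll n$), the inequality~\eqref{eq:iG-q} gives $i(G) \le (q+1) \binom{n}{q}\, 2^R$, hence
\[
  \log_2 i(G) \le R + n H(q/n) + O(\log n),
\]
where $H$ is the binary entropy function and I used $\log_2 \binom{n}{q} \le n H(q/n)$.

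It remains to choose $\eta$ so that the two error terms $R - n/2 = \eta n / 2$ and $n H(q/n)$ are balanced. With $q/n = \Theta(1/(d\eta))$, the standard estimate $H(x) \le x \log_2(e/x)$ gives $n H(q/n) = \Theta\bigl(n \log(d\eta) / (d\eta)\bigr)$, so the two terms coincide in order of magnitude precisely when $\eta^2 d$ is comparable to $\log(d \eta)$. This is solved up to absolute constants by $\eta := \sqrt{\log d / d}$, which makes each contribution $\Theta(n \sqrt{\log d / d})$ and therefore yields $\log_2 i(G) \le (1 + C \sqrt{\log d / d})\, n/2$ for some absolute $C$, as claimed. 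The only real obstacle is this balancing calculation: one must verify that the ``waste'' $\eta n/2$ incurred by pushing $R$ above $n/2$ (so that the density hypothesis is satisfied with a usable $\beta$) is comparable to the cost $\log_2 \binom{n}{q}$ of naming the fingerprint of the independent set, and it is exactly this trade-off that dictates the $\sqrt{\log d / d}$ exponent in the theorem.
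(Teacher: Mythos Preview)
Your proposal is correct and follows essentially the same route as the paper. You use the identical degree-counting bound $2e_G(U)\ge d(2|U|-n)$, the same choice of $R$ (your $\eta=\sqrt{\log d/d}$ gives exactly the paper's $R=n/2+\beta n^2/(2d)$ with $\beta=\sqrt{d\log d}/n$), and the same balancing of the fingerprint cost $\binom{n}{q}$ against the excess $R-n/2$; the only cosmetic difference is that you read the bound directly from~\eqref{eq:iG-q} whereas the paper sums the estimate~\eqref{eq:iG-m-bound} of Lemma~\ref{lemma:KW-basic} over $m$.
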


Alon~\cite{Al91} speculated that when $n$ is divisible by $2d$, then the disjoint union of $\frac{n}{2d}$ complete bipartite graphs $K_{d,d}$ has the maximum number of independent sets among all $d$-regular graphs with $n$~vertices. A slightly stronger statement (Theorem~\ref{thm:Kahn-Zhao} below) was later conjectured by Kahn~\cite{Ka01}, who proved it under the additional assumption that $G$ is bipartite, using a beautiful entropy argument. This assumption was recently shown to be unnecessary by Zhao~\cite{Zh10}, who gave a short and elegant argument showing that for every $n$-vertex $d$-regular graph $G$, there exists a $2n$-vertex $d$-regular bipartite graph $G'$ such that $i(G) \le i(G')^{1/2}$. The results of Kahn and Zhao yield the following.

\begin{thm}[{\cite{Ka01,Zh10}}]
  \label{thm:Kahn-Zhao}
  For every $n$-vertex $d$-regular graph $G$,
  \[
  i(G) \le i(K_{d,d})^{\frac{n}{2d}} = \left(2^{d+1}-1\right)^{\frac{n}{2d}}.
  \]
\end{thm}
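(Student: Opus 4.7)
The plan is in two stages, matching the two attributions.

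Stage one is Zhao's reduction to bipartite graphs. Form the bipartite double cover $G' := G \times K_2$ on vertex set $V(G) \times \{0,1\}$ with an edge $\{(u,0),(v,1)\}$ for every $uv \in E(G)$; this graph is bipartite, $d$-regular, and has $2n$ vertices. The main inequality to prove is
\[
i(G)^2 \le i(G').
\]
An independent set of $G'$ corresponds to a pair $(X,Y)$ of subsets of $V(G)$ with no $G$-edge from $X$ to $Y$, so one needs an injection $\II(G)^2 \hookrightarrow \II(G')$. The naive map $(A,B) \mapsto (A,B)$ fails only on $G$-edges between $A \setminus B$ and $B \setminus A$, but the graph $G[(A \setminus B) \cup (B \setminus A)]$ is itself bipartite with those two sets as its sides, since $A$ and $B$ are each independent. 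Fix a total order on $V(G)$; on each connected component $C$ of that bipartite graph one then decides canonically (comparing, say, the smallest element of $C \cap A$ to the smallest of $C \cap B$) whether to put the $A$-side in $X$ and the $B$-side in $Y$, or to swap. The result is a valid $(X,Y)$, and the rule is reversible.

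Stage two is Kahn's bipartite theorem, which I would apply to $G'$: every bipartite $d$-regular graph on $N$ vertices has at most $i(K_{d,d})^{N/(2d)}$ independent sets. Write $V(G') = L \sqcup R$ for the bipartition, let $J$ be a uniformly random element of $\II(G')$, and set $X_v = \mathbf{1}[v \in J]$ for $v \in V(G')$. Then $\log_2 i(G') = H(X_L) + H(X_R \mid X_L)$, and, since $R$ is independent in $G'$, the bits $(X_r)_{r \in R}$ are conditionally independent given $X_L$, so $H(X_R \mid X_L) = \sum_{r \in R} H(X_r \mid X_{N(r)})$. Shearer's inequality applied to the $d$-cover of $L$ by the neighbourhoods $\{N(r)\}_{r \in R}$ gives $H(X_L) \le \frac{1}{d}\sum_{r \in R} H(X_{N(r)})$, whence
\[
\log_2 i(G') \le \sum_{r \in R} \left( \tfrac{1}{d} H(X_{N(r)}) + H(X_r \mid X_{N(r)}) \right).
\]
Kahn's key lemma then bounds each summand by $\frac{1}{d} \log_2 i(K_{d,d})$ via an explicit optimisation over the joint distribution of $(X_{N(r)}, X_r)$, with equality at the uniform distribution on $\II(K_{d,d})$.

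Combining the two stages, $i(G)^2 \le i(G') \le i(K_{d,d})^{2n/(2d)} = i(K_{d,d})^{n/d}$, and taking square roots together with the direct count $i(K_{d,d}) = 2^{d+1}-1$ yields the theorem. The main obstacle is Kahn's local entropy inequality, which lives outside the Kleitman--Winston framework of Section~\ref{sec:KW} and requires a genuine convexity argument on the simplex of distributions over $\{0,1\}^{d+1}$; Zhao's reduction is comparatively easier, though verifying injectivity of the component-flipping rule still takes some care.
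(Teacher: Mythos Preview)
The paper does not actually prove this theorem; it merely states it after a one-sentence description each of Kahn's contribution (the entropy proof in the bipartite case) and Zhao's (the reduction of the general case to the bipartite one via $i(G)^2\le i(G\times K_2)$). Your two-stage plan matches that description exactly, and Stage~2 is a faithful outline of Kahn's argument, up to the harmless slip that $H(X_R\mid X_L)=\sum_r H(X_r\mid X_L)\le\sum_r H(X_r\mid X_{N(r)})$ is an inequality, not an equality.

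Your execution of Stage~1, however, contains a genuine error. Writing $C_A=C\cap(A\setminus B)$ and $C_B=C\cap(B\setminus A)$ for a component~$C$ of $G[A\triangle B]$, every edge of~$C$ runs between $C_A$ and~$C_B$. Hence both of your options---``$C_A\to X$, $C_B\to Y$'' and the swap ``$C_A\to Y$, $C_B\to X$''---leave every edge of~$C$ as an $X$--$Y$ edge, so the image is \emph{not} independent in~$G'$. Zhao's move is different: one sends the \emph{entire} component~$C$ into~$X$ or the entire component into~$Y$ (deciding which by your canonical rule). Then all edges of~$C$ lie within~$X$ (or within~$Y$) and are harmless, while $G$-edges from~$C$ to another component or to $A\cap B$ are already excluded by the independence of~$A$ and~$B$. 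Injectivity survives because each component, being connected and bipartite, has a unique bipartition, and the canonical rule tells you which side was originally in~$A$. Equivalently: for fixed $A\cup B$ and $A\cap B$, proper $2$-colourings of $G[A\triangle B]$ (the ``both independent'' side) number at most the $2$-colourings with no crossing edges (the ``independent in $G'$'' side), both counts being $2^{c}$ when $G[A\triangle B]$ is bipartite and $0$ versus $2^{c}$ otherwise.
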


We now derive Theorem~\ref{thm:Sapozhenko} from Lemma~\ref{lemma:KW-basic}.

\begin{proof}[Proof of Theorem~\ref{thm:Sapozhenko}]
  Let $G$ be an $n$-vertex $d$-regular graph. We shall in fact estimate $i(G,m)$ for each $m$ and deduce the claimed bound on $i(G)$ by summing over all $m$. Since $i(G) \le 2^n$ and $C$ is an arbitrary constant, we may assume that $d$ is sufficiently large (and therefore $n$ is sufficiently large). We consider two cases. First, if $m \le n/10$, then we simply note that
  \begin{equation}
    \label{eq:ind-set-reg-1}
    i(G,m) \le \binom{n}{\frac{n}{10}} \le (10e)^{\frac{n}{10}} \le 2^{0.48n},
  \end{equation}
  where we used the well-known inequality $\binom{a}{b} \le (ea/b)^b$ valid for all $a$ and $b$.

  In the complementary case, $m > n/10$, we shall apply Lemma~\ref{lemma:KW-basic}. To this end, let $B \subseteq V(G)$ and note that
  \begin{equation}
    \label{eq:degree-sums}
    d|B| = \sum_{v \in B} \deg_G(v) = 2e(B) + e(B,B^c) \le 2e(B) + \sum_{v \in B^c} \deg_G(v) = 2e(B) + d(n-|B|).
  \end{equation}
  Fix an arbitrary $\beta$, let $R = \frac{n}{2} + \frac{\beta n^2}{2d}$, and observe that if $|B| \ge R$, then~\eqref{eq:degree-sums} yields
  \begin{equation}
    \label{eq:eB-bound}
    e(B) \ge \frac{d}{2}(2|B| - n) \ge \frac{d}{2}(2R - n) \ge \frac{\beta n^2}{2} \ge \beta\binom{|B|}{2}.
  \end{equation}
  Assume that $\beta > 10/n$ and let $q = \lceil 1/\beta \rceil$. By Lemma~\ref{lemma:KW-basic}, since
  \[
  e^{-\beta q} n \le e^{-1}n \le R,
  \]
  then for every $m$ with $m \ge \lceil n/10 \rceil \ge q$,
  \begin{equation}
    \label{eq:ind-set-reg-2}
    i(G,m) \le \binom{n}{q} \binom{\frac{n}{2} + \frac{\beta n^2}{2d}}{m-q} \le \left(\frac{en}{q}\right)^q \binom{\frac{n}{2} + \frac{\beta n^2}{2d}}{m-q} \le  (e\beta n)^{\lceil 1/\beta \rceil} \cdot \binom{\frac{n}{2} + \frac{\beta n^2}{2d}}{m-q}.
  \end{equation}
  Summing~\eqref{eq:ind-set-reg-1} and~\eqref{eq:ind-set-reg-2} over all $m$ yields
  \[
  i(G) \le 2^{0.49n} + 2^{\frac{n}{2} + \frac{\beta n^2}{2d} + \lceil 1/\beta \rceil \log_2(e\beta n)}
  \]
  We obtain the claimed bound by letting $\beta = \frac{\sqrt{d \log d}}{n}$; we note that $\sqrt{d \log d} > 10$ as we assumed that $d$ is large.
\end{proof}

We ought to indicate here that one may significantly improve the upper bound given by Theorem~\ref{thm:Sapozhenko} by a somewhat more careful analysis of the execution of the Kleitman--Winston algorithm than the one given in the proof of Lemma~\ref{lemma:KW-basic}. The main reason why one should expect such an~improvement to be possible is the crudeness of the second inequality in~\eqref{eq:eB-bound} in the case when $|B| - n/2$ is much larger than $R - n/2$. The proof of Lemma~\ref{lemma:KW-basic} uses~\eqref{eq:eB-bound} to show that in each step of the~algorithm, the set $A$ loses at least $\beta |A|$ elements whereas in reality $A$ will lose many more elements as long as $|A|$ is not very close to $n/2 + \beta n^2 / (2d)$. By considering the `evolution' of $|A|$ partitioned into `dyadic' intervals $\big(n/2 + n/2^{i+1}, n/2 + n/2^i\big]$, where $1 \le i \le \log_2 d - \log_2 \log_2 d$, one may prove that there is an absolute constant $C$ such that every $n$-vertex $d$-regular graph $G$ satisfies
\[
i(G) \le 2^{\left(1 + C\frac{(\log d)^2}{d}\right)\frac{n}{2}}.
\]
One rigorous way of tracking this `evolution' of $|A|$ is to repeatedly invoke Lemma~\ref{lemma:KW-containers} with $R_i = n/2 + n/2^{i+1}$ and $D_i = d/2^i$ for $i = 1, \ldots, \log_2 d - \log_2 \log_2 d$. We leave filling in the details as an~exercise for the reader.

\subsection{Sum-free sets}

The conjecture of Granville mentioned in the previous section was motivated by a problem posed by Cameron and Erd\H{o}s at the same number theory conference. A set $A$ of elements of an abelian group is called \emph{sum-free} if there are no $x,y,z \in A$ satisfying $x + y = z$. Let $[n]$ denote the set $\{1, \ldots, n\} \subseteq \ZZ$. Cameron and Erd\H{o}s raised the question of determining the~number $\SF([n])$ of sum-free sets contained in the set $[n]$. They noted that any set containing either only odd integers or only integers greater than $n/2$ is sum-free, and that it is unlikely that there is another large collection of sum-free sets that are not essentially of one of the above two types. In view of this, they conjectured that $\SF([n]) = O(2^{n/2})$. Soon afterwards, Alon~\cite{Al91} showed that the aforementioned conjecture of Granville implies the following weaker estimate on $\SF([n])$, which will serve as a second example application of Lemma~\ref{lemma:KW-basic}.

\begin{thm}[{\cite{Al91}}]
  \label{thm:CE-weak}
  The set $\{1, \ldots, n\}$ has at most $2^{(1/2+o(1))n}$ sum-free subsets.
\end{thm}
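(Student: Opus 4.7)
The plan is to split the count $\SF([n])$ according to the size of the sum-free set $A$, combining a trivial binomial bound for small sets with an application of Lemma~\ref{lemma:KW-basic} for large ones. Fix a small constant $\eps > 0$ so that $H(\eps) < 1/2$ (binary entropy). The number of sum-free sets with $|A| \le \eps n$ is at most $\sum_{k \le \eps n} \binom{n}{k} \le 2^{H(\eps) n}$, which already lies within the target. It remains to treat the case $|A| > \eps n$.

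Here the fundamental obstruction is that sum-freeness is a ternary constraint, so Lemma~\ref{lemma:KW-basic} does not apply directly. My plan is to reduce to a graph problem by conditioning on $T := A \cap (n/2, n]$: sum-freeness forces the lower portion $A \cap [n/2]$ to be an independent set in the graph $G_T$ on $[n/2]$ whose edges are the pairs $\{x,y\}$ with $x+y \in T$, since $x, y \in A$ together with $x+y \in T \subseteq A$ would form a forbidden triple. This yields
\[
\SF([n]) \le 2^{H(\eps) n} + \sum_{T \subseteq (n/2, n]} i(G_T).
\]
To bound each $i(G_T)$ I would apply Lemma~\ref{lemma:KW-basic}. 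Each $t \in T$ contributes a matching of $\approx \min(t, n-t)/2$ pairs in $[n/2]$ summing to $t$, and because these pairs are spread across the interval, any $U \subseteq [n/2]$ of density~$\ge \delta$ should induce $\Omega(\delta^2 |T| n)$ edges of $G_T$, giving the local density condition~\eqref{eq:eGU} with $\beta$ proportional to $|T|/n$.

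The main obstacle I foresee is global bookkeeping: using the naive estimate $i(G_T) \le 2^{n/4 + o(n)}$ (which one gets from Lemma~\ref{lemma:KW-basic} when $|T|$ is a positive fraction of $n$) and summing over all $T \subseteq (n/2, n]$ overshoots the target by a factor of $2^{n/4}$. The resolution has to exploit that the extremal $2^{n/2}$ contribution comes only from sum-free sets essentially contained in $(n/2, n]$, for which the lower half is empty and $G_T$ contributes only $i(G_T) = 1$; for any $T$ that leaves substantial room for the lower half, $i(G_T)$ should be much smaller than $2^{n/4}$. I would make this precise either by further conditioning on the size $|A \cap [n/2]|$ and reapplying Lemma~\ref{lemma:KW-basic} with parameters tuned to that size (in the spirit of the dyadic refinement discussed after Theorem~\ref{thm:Sapozhenko}), or by recursing on $n$ and absorbing the cost of the lower half through an inductive bound of the form $\SF([n/2]) \le 2^{(1/4+o(1))n}$. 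Calibrating the exponents so that they collapse to $(1/2 + o(1))n$ is where I expect the bulk of the work to lie.
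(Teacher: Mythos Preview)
Your reduction is genuinely different from the paper's, and the gap you flag at the end is real and not resolved by either of your proposed fixes.

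The paper does \emph{not} condition on the whole upper half $T = A \cap (n/2,n]$. Instead it conditions on a tiny seed: the $\lfloor n^{2/3}\rfloor$ \emph{smallest} elements $S_A$ of $A$ (having first disposed of sets with fewer than $n^{2/3}$ elements below $n/2$, at a cost of $2^{n/2+o(n)}$). For each possible seed $S$, it builds the Cayley graph $G_S$ on $\ZZ_n$ with connection set $S\cup(-S)$; because $S$ lies entirely in $[1,\lceil n/2\rceil-1]$ this graph is $2|S|$-regular. Sum-freeness and minimality of $S_A$ force $A\setminus S_A$ to be independent in $G_{S_A}$, so one may apply Theorem~\ref{thm:Sapozhenko} directly. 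Since there are only $\binom{n/2}{n^{2/3}}=2^{o(n)}$ seeds, the bookkeeping problem you describe never arises.

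Your route, by contrast, pays $2^{n/2}$ for the choice of $T$ up front and then has to show that the lower-half contribution is $2^{o(n)}$ on average. Two concrete obstacles:
\begin{itemize}
\item Your local density claim for $G_T$ is false as stated. If $T$ consists of odd numbers in $(n/2,n]$ and $U$ is the set of even numbers in $[n/2]$, then $e_{G_T}(U)=0$ even though both $|T|$ and $|U|$ are of order $n$. So Lemma~\ref{lemma:KW-basic} cannot be applied to $G_T$ uniformly in $T$, and the graph $G_T$ is not regular, so Theorem~\ref{thm:Sapozhenko} does not apply either.
\item The recursion you suggest, using only that $A\cap[n/2]$ is sum-free in $[n/2]$, yields $\SF([n])\le 2^{n/2}\SF([n/2])$, which unwinds to the trivial $2^n$. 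To do better you would have to combine the sum-free constraint on $A\cap[n/2]$ with the $G_T$-independence constraint, and you have not indicated how.
\end{itemize}
Also, your intuition that the extremal $2^{n/2}$ comes ``only from sets essentially contained in $(n/2,n]$'' is incomplete: the odd numbers form an equally large extremal family with $|A\cap[n/2]|\approx n/4$, so the lower half is far from empty in that case. The paper's device of seeding with \emph{differences} rather than \emph{sums}, and with a sublinear seed, is exactly what makes the argument go through in two lines.
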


The Cameron--Erd\H{o}s conjecture was solved some fifteen years later by Green~\cite{Gr04} and, independently, by Sapozhenko~\cite{Sa03}. The solution due to Sapozhenko uses a method akin to the Kleitman--Winston algorithm presented in Section~\ref{sec:KW}, while the one due to Green uses discrete Fourier analysis.\footnote{However, one might still argue that the general `philosophy' behind Green's proof is similar.} We do not discuss either of their arguments here, but instead refer the interested reader to the original papers. Finally, we mention that strong estimates on the number of sum-free subsets of $[n]$ with a given number of elements, which imply the conjecture, were recently obtained in~\cite{AlBaMoSa14-CE}; the~proof there employs the ideas presented in Section~\ref{sec:KW}.

\begin{proof}[Proof of Theorem~\ref{thm:CE-weak}]
  Observe first that the number of all subsets of $[n]$ which contain fewer than $n^{2/3}$ elements from $\{1, \ldots, \lceil n/2 \rceil - 1\}$ is at most $(n/2)^{n^{2/3}} 2^{n/2+1}$. Therefore, we may restrict our attention to sum-free sets that contain at least $n^{2/3}$ elements strictly smaller than $n/2$. For each such set $A$, let $S_A$ be the set of $\lfloor n^{2/3} \rfloor$ smallest elements of $A$.

  Given a set $S \subseteq \{1, \ldots, \lceil n/2 \rceil - 1\}$, define an auxiliary graph $G_S$ with vertex set $[n]$ by letting
  \[
  E(G_S) = \{xy \colon \text{$x + s \equiv y \pmod n$ for some $s \in S \cup (-S)$}\}
  \]
  and note that $G_S$ is $2|S|$-regular, as $n - (\lceil n/2 \rceil - 1) > \lceil n/2 \rceil -1$ and hence $S$ and $-S$ contain different residues modulo $n$. The~crucial observation is that for every sum-free $A$ as above, the set $A \setminus S_A$ is an independent set in the graph $G_{S_A}$. Indeed, otherwise there would be $x, y \in A \setminus S_A$ and an $s \in S_A \cup (-S_A)$  with $x + s \equiv y \pmod n$; since $1 \le |s| < x, y \le n$, this is only possible when $x + s = y$. In particular, for a given $S \subseteq \{1, \ldots, \lceil n / 2 \rceil -1\}$, there are at most $i(G_S)$ sum-free sets~$A$ satisfying $S = S_A$. By Theorem~\ref{thm:Sapozhenko}, we conclude that
  \[
  \SF([n]) \le (n/2)^{n^{2/3}} 2^{n/2+1} + \binom{n/2}{n^{2/3}} \cdot 2^{\left(1 + O(n^{-1/3}\sqrt{\log n})\right)\frac{n}{2}} \le 2^{\left(1/2 + O(n^{-1/3} \log n)\right)n}.\qedhere
  \]
\end{proof}

Before closing this section, we remark that the paper of Alon~\cite{Al91} started a very successful line of inquiry into the closely related problem of determining the number of sum-free sets contained in an arbitrary finite abelian group; see, e.g., \cite{AlBaMoSa14-AG, GrRu04, GrRu05, LeLuSc01, Sa02}. In many of these works, variations of the ideas presented in Section~\ref{sec:KW} play a prominent role.

\subsection{Independent sets in regular graphs without small eigenvalues}

Since every $n$-vertex bipartite graph $G$ satisfies $\alpha(G) \ge n/2$ and hence it contains at least $2^{n/2}$ independent sets, the upper bounds for $i(G)$ proved in Section~\ref{sec:indep-sets-reg-graphs} are essentially best possible whenever $G$ is bipartite. It is natural to ask whether these bounds can be improved when one assumes that $G$ is `far' from being bipartite. An affirmative answer to this question was given by Alon and R\"odl~\cite{AlRo05}.

Recall that the adjacency matrix of an $n$-vertex graph $G$ is a real-valued symmetric $n \times n$ matrix and therefore it has $n$ real eigenvalues. Denote these eigenvalues by $\lambda_1, \ldots, \lambda_n$, where $\lambda_1 \ge \ldots \ge \lambda_n$. It is well known that the quantity $\max\{|\lambda_2|, |\lambda_n|\}$, called the \emph{second eigenvalue} of $G$, is closely tied with, among other parameters, the expansion properties of $G$. We shall be interested only in the smallest eigenvalue $\lambda_n$ of $G$, which we denote by $\lambda(G)$. It was first proved by Hoffman~\cite{Ho70} that every $d$-regular $n$-vertex graph $G$ satisfies $\alpha(G) \le \frac{-\lambda(G)}{d-\lambda(G)}n$. This was later significantly strengthened\footnote{In particular, Lemma~\ref{lemma:Alon-Chung} implies that $e_G(A) > 0$ for every $A$ with more than $\frac{-\lambda(G)}{d-\lambda(G)}n$ vertices.} by Alon and Chung~\cite{AlCh88}, who established the following relation between~$\lambda(G)$ and the number of edges induced by large sets of vertices in $G$, cf.\ the expander mixing lemma (see, e.g.,~\cite{HoLiWi06}).

\begin{lemma}[{\cite{AlCh88}}]
  \label{lemma:Alon-Chung}
  Let $G$ be an $n$-vertex $d$-regular graph. For all $A \subseteq V(G)$,
  \[
  2e_G(A) \ge \frac{d}{n}|A|^2 + \frac{\lambda(G)}{n}|A|\big(n-|A|\big).
  \]
\end{lemma}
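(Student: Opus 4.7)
The plan is to apply the spectral theorem to the adjacency matrix $M$ of $G$. Let $x \in \{0,1\}^{V(G)}$ denote the characteristic vector of $A$, so $\|x\|^2 = |A|$ and the fundamental identity $x^T M x = 2 e_G(A)$ holds. The right-hand side of the claim is linear in $d$ and $\lambda(G)$, which strongly suggests decomposing $x$ along an orthonormal eigenbasis of $M$ and separating out the contribution of the top eigenvector from the rest.

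The key structural fact I would use is that, because $G$ is $d$-regular, the constant vector $v_1 := \mathbf{1}/\sqrt{n}$ is a unit eigenvector of $M$ with eigenvalue $d = \lambda_1$. Extend it to an orthonormal eigenbasis $v_1, \ldots, v_n$ of $M$ with corresponding eigenvalues $d = \lambda_1 \ge \lambda_2 \ge \ldots \ge \lambda_n = \lambda(G)$, and write $x = \alpha v_1 + w$ with $w \perp v_1$. A direct computation gives $\alpha = \langle x, v_1 \rangle = |A|/\sqrt{n}$, and Pythagoras then yields $\|w\|^2 = |A| - |A|^2/n = |A|(n-|A|)/n$.

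Substituting the decomposition into the quadratic form gives $2 e_G(A) = x^T M x = d \alpha^2 + w^T M w$. The remaining step is the Rayleigh-quotient bound $w^T M w \ge \lambda(G) \|w\|^2$: writing $w = \sum_{i \ge 2} b_i v_i$, one has $w^T M w = \sum_{i \ge 2} \lambda_i b_i^2 \ge \lambda(G) \sum_{i \ge 2} b_i^2 = \lambda(G) \|w\|^2$. Plugging in the computed values of $\alpha^2$ and $\|w\|^2$ gives exactly the claimed inequality.

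There is no real obstacle here, but one subtlety is worth flagging: since $\lambda(G)$ is typically negative, the bound $w^T M w \ge \lambda(G)\|w\|^2$ must be applied on the entire orthogonal complement of $v_1$ (rather than on a single eigenspace), and this is precisely why the smallest eigenvalue $\lambda(G)$, and not $\lambda_2$, appears in the statement. The role of $d$-regularity is essential and enters exactly once, namely to identify $\mathbf{1}$ as an eigenvector so that $\alpha^2$ and the coefficient of $\lambda(G)$ take the clean form displayed in the lemma.
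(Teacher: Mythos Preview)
Your proof is correct and is the standard spectral argument for this inequality. Note, however, that the paper does not actually prove Lemma~\ref{lemma:Alon-Chung}; it is quoted as a known result from Alon and Chung~\cite{AlCh88} and used as a black box in the proof of Theorem~\ref{thm:eigenvalue}. Your argument is exactly the classical one: decompose the characteristic vector of $A$ along the all-ones eigenvector and its orthogonal complement, and bound the quadratic form on the complement from below by the smallest eigenvalue. There is nothing to compare against in the paper itself.
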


Alon and R\"odl~\cite{AlRo05} were the first to prove that if $\lambda(G)$ is much larger than $-d$, then each such $G$ has far fewer than $2^{n/2}$ independent sets. As our next application of Lemma~\ref{lemma:KW-basic}, we derive a similar estimate, originally proved in~\cite{AlBaMoSa14-AG}.

\begin{thm}[{\cite{AlBaMoSa14-AG}}]
  \label{thm:eigenvalue}
  For every $\eps > 0$, there exists a constant $C$ such that the following holds. If $G$ is an~$n$-vertex $d$-regular graph with $\lambda(G) \ge -\lambda$, then
  \[
  i(G,m) \le \binom{\left( \frac{\lambda}{d+\lambda} + \eps \right) n}{m},
  \]
  provided that $m \ge Cn/d$.
\end{thm}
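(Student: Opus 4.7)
The plan is to apply Lemma~\ref{lemma:KW-basic} with $R = \lceil (\mu+\eps/2)n\rceil$, where $\mu = \lambda/(d+\lambda)$, and then verify that the resulting bound $\binom{n}{q}\binom{R}{m-q}$ is at most $\binom{R'}{m}$ for $R' := \lceil (\mu+\eps)n \rceil$. The slack of roughly $(\eps/2)n$ between $R$ and $R'$ is precisely what is required to absorb the multiplicative ``fingerprint'' factor $\binom{n}{q}$.

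We may assume $\mu + \eps \le 1$, since otherwise $\binom{R'}{m} \ge \binom{n}{m} \ge i(G, m)$ trivially. By Lemma~\ref{lemma:Alon-Chung} and $\lambda(G) \ge -\lambda$, every $U \subseteq V(G)$ satisfies
\[
2 e_G(U) \ge \frac{|U|}{n}\bigl((d+\lambda)|U| - \lambda n\bigr).
\]
The identity $(d+\lambda)R - \lambda n = (\eps/2)(d+\lambda)n$ then shows that $e_G(U) \ge \beta\binom{|U|}{2}$ for every $U$ with $|U| \ge R$, where $\beta := (\eps/2)(d+\lambda)/(R-1) = \Theta(\eps d/n)$. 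Choosing $q = \lceil \beta^{-1}\log(n/R)\rceil$ ensures $R \ge e^{-\beta q}n$ and yields $q = O(n/d)$ with the implicit constant depending only on $\eps$. Lemma~\ref{lemma:KW-basic} then supplies $i(G, m) \le \binom{n}{q}\binom{R}{m-q}$ for every $m \ge q$.

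It remains to prove $\binom{n}{q}\binom{R}{m-q} \le \binom{R'}{m}$ whenever $m \ge Cn/d$ and $C = C(\eps)$ is chosen sufficiently large. Taking logarithms and invoking the entropy form of Stirling, $\log\binom{a}{b} = aH(b/a) + O(\log a)$, the inequality reduces, upon setting $\eta = q/m$ and carrying out routine algebra, to
\[
\log\frac{\mu+\eps}{\mu+\eps/2} \ge \eta \log \frac{1}{\mu + \eps/2} + H(\eta) + o(1),
\]
where $H$ is the binary entropy and the $o(1)$ term tends to zero as $m \to \infty$. At $\eta = 0$ the left-hand side is a strictly positive constant while the right-hand side vanishes, so by continuity the inequality persists for every $\eta$ below a positive threshold $\eta^*(\eps) > 0$ (uniform over $\mu \in [0, 1]$). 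Because $q = O(n/d)$ with constant depending on $\eps$ while $m \ge Cn/d$, the ratio $\eta = q/m$ is at most an $\eps$-dependent constant divided by $C$, which can be made smaller than $\eta^*$ by taking $C = C(\eps)$ large enough. The main obstacle is precisely this quantitative binomial comparison, which requires careful bookkeeping of the Stirling error terms; once set up correctly, the resulting entropy inequality is a routine continuity argument.
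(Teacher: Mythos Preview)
Your approach---apply Lemma~\ref{lemma:KW-basic} with $R\approx(\mu+\eps/2)n$ and then show $\binom{n}{q}\binom{R}{m-q}\le\binom{(\mu+\eps)n}{m}$---is exactly the paper's. The only difference is in how the final binomial comparison is executed: the paper telescopes the exact ratio $r(t+1)/r(t)$ and uses the elementary inequalities $a!>(a/e)^a$ and $\binom{a}{c}\ge(a/b)^c\binom{b}{c}$, whereas you invoke the entropy form of Stirling.

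There is, however, a genuine technical gap in your Stirling step. With the error bound you quote, $\log\binom{a}{b}=aH(b/a)+O(\log a)$, each of the three binomials contributes $O(\log n)$, so the comparison carries an additive $O(\log n)$ which after dividing by $m$ becomes $O((\log n)/m)$---not the ``$o(1)$ as $m\to\infty$'' you claim. The theorem is non-asymptotic: it must hold for every triple $(n,d,m)$ with $m\ge Cn/d$, and when $d=\Theta(n)$ this only forces $m\ge C$ (since $n/d>1$), so $(\log n)/m$ is unbounded and swamps the constant gap $\log\frac{\mu+\eps}{\mu+\eps/2}$. The fix is painless: use the sharper Stirling error $O(\log b)$ (valid when $b\le a/2$), which gives total error $O((\log m)/m)$; now $m\ge C$ alone suffices, and choosing $C=C(\eps)$ large makes $(\log m)/m$ smaller than the uniform lower bound $\log\frac{1}{1-\eps/2}$ for the left-hand side. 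Alternatively, the paper's exact ratio argument avoids asymptotics altogether and is no longer than your entropy computation.
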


\begin{proof}[Proof of Theorem~\ref{thm:eigenvalue}]
  Fix some $\eps > 0$, let $G$ be an $n$-vertex $d$-regular graph, and let $\lambda = -\lambda(G)$. We may assume that $\frac{\lambda}{d+\lambda} + \eps < 1$ as otherwise there is nothing to prove. Let $U \subseteq V(G)$ be an arbitrary set with $|U| \ge \left(\frac{\lambda}{d+\lambda} + \frac{\eps}{2}\right)n$. Lemma~\ref{lemma:Alon-Chung} implies that
  \[
  2e_G(U) \ge \frac{d}{n} |U|^2 - \frac{\lambda}{n}|U|\big(n-|U|\big) = \frac{|U|}{n} \big((d+\lambda)|U| - \lambda n \big) \ge \frac{\eps d}{2}|U| \ge \frac{\eps d}{n} \binom{|U|}{2}.
  \]
  Let $\beta = \frac{\eps d}{n}$, $q = \left\lceil \frac{\log (2/\eps)}{\eps} \cdot \frac{n}{d}\right\rceil$, and $R = \left(\frac{\lambda}{d+\lambda} + \frac{\eps}{2}\right)n$ and observe that $R \ge e^{-\beta q}n$. If follows from Lemma~\ref{lemma:KW-basic} that for every $m$ with $m \ge q$,
  \begin{equation}
    \label{eq:eigenvalue-iGm-bound}
    i(G,m) \le \binom{n}{q} \binom{R}{m-q}.
  \end{equation}
  Let $r(t)$ denote the right-hand side of~\eqref{eq:eigenvalue-iGm-bound} with $q$ replaced by $t$. We may clearly assume that $m \le \alpha(G) \le \frac{\lambda}{d+\lambda}n$, as otherwise $i(G,m) = 0$. An elementary calculation shows that
  \[
  \frac{r(t+1)}{r(t)} = \frac{n-t}{t+1} \cdot \frac{m-t}{R - m + t + 1} \le \frac{nm}{(t+1)(R-m)} \le \frac{2m}{\eps(t+1)}
  \]
  and hence
  \[
  i(G,m) = r(q) = \prod_{t=0}^{q-1} \frac{r(t+1)}{r(t)} \cdot r(0) \le \frac{(2m)^q}{\eps^q q!} \cdot \binom{R}{m} \le \left(\frac{2em}{\eps q}\right)^q \cdot \left(\frac{R}{R + \eps n /2}\right)^m \binom{R + \eps n/ 2}{m},
  \]
  where we used the inequalities $a! > (a/e)^a$ and $\binom{a}{c} \ge (a/b)^c \binom{b}{c}$ valid whenever $a \ge b \ge c \ge 0$. Finally, if $K$ is sufficiently large (as a function of $\eps$) and $C \ge K \cdot \left\lceil \frac{\log(2/\eps)}{\eps} \right\rceil$, then for every $m$ with $m \ge Cn/d \ge Kq$,
  \[
  \left( \frac{2em}{\eps q} \right)^{q/m} \cdot \frac{R}{R + \eps n /2} \le \left(\frac{2Ke}{\eps}\right)^{1/K} \cdot \left(1-\frac{\eps}{2}\right) \le 1,
  \]
  which completes the proof of the theorem.
\end{proof}

We close this section with several remarks. First, the constant $\frac{\lambda}{d+\lambda}$ in the assertion of the theorem is optimal as for many values of $n$, $d$, and $\alpha$, there are $n$-vertex $d$-regular graphs with $\alpha(G) = \frac{-\lambda(G)}{d-\lambda(G)} n = \alpha n$. Second, the assumption that $m \ge Cn/d$ cannot be relaxed as for every~$\eps > 0$, every $n$-vertex $d$-regular graph $G$ satisfies $i(G,m) \ge \binom{(1-\eps)n}{m}$ whenever $m \le \eps n/(d+1)$. (To~see this, consider the greedy process of constructing an independent set which repeatedly picks an~arbitrary vertex of $G$ and removes it and all of its neighbours from $G$.) Third, the above theorem implies the conjecture of Granville stated in Section~\ref{sec:indep-sets-reg-graphs} as $\lambda(G) \ge -d$ for every $d$-regular graph $G$. Finally, we refer the interested reader to~\cite{AlBaMoSa14-AG} and~\cite{AlRo05}, where Theorem~\ref{thm:eigenvalue} was used to obtain upper bounds on the number of sum-free sets in abelian groups of even order and lower bounds on some multicolor Ramsey numbers, respectively.

\subsection{The number of $C_4$-free graphs}

\label{sec:number-C4-free-graphs}

As our next example, we present the main result from one of the papers of Kleitman and Winston~\cite{KlWi82} which introduced the methods described in Section~\ref{sec:KW}. Call a graph \emph{$C_4$-free} if it does not contain a cycle of length four and let $\ex(n,C_4)$ denote the maximum number of edges in a $C_4$-free graph with $n$ vertices. A classical result of K\H{o}v\'ari, S\'os, and Tur\'an~\cite{KoSoTu54} together with a construction due to Brown~\cite{Br66} and Erd\H{o}s, R\'enyi, and S\'os~\cite{ErReSo66} imply that
\[
\ex(n,C_4) = \left(\frac{1}{2} + o(1)\right) n^{3/2}.
\]
Let $f_n(C_4)$ be the number of (labeled) $C_4$-free graphs on the vertex set $\{1, \ldots, n\}$. Since each subgraph of a $C_4$-free graph is itself $C_4$-free, we have
\[
2^{\ex(n,C_4)} \le f_n(C_4) \le \sum_{m=0}^{\ex(n,C_4)} \binom{\binom{n}{2}}{m} = 2^{\Theta(\ex(n,C_4) \log n)},
\]
which yields
\begin{equation}
  \label{eq:fnC4-trivial}
  \ex(n,C_4) \le \log_2 f_n(C_4) \le O\big(\ex(n,C_4) \log n\big).
\end{equation}
Answering a question of Erd\H{o}s, Kleitman and Winston~\cite{KlWi82} showed that the lower bound in~\eqref{eq:fnC4-trivial} is tight up to a constant factor.

\begin{thm}[{\cite{KlWi82}}]
  \label{thm:KlWi-C4}
  There is a positive constant $C$ such that
  \[
  \log_2 f_n(C_4) \le Cn^{3/2}.
  \]
\end{thm}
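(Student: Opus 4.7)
The plan is to bound $f_n(C_4)$ by iterating the Kleitman--Winston method over the vertices of $H$. I fix an ordering $v_1, \ldots, v_n$ of $[n]$ and, for each $C_4$-free graph $H$, consider the sequence of back-neighborhoods $N_i := N_H(v_i) \cap \{v_1, \ldots, v_{i-1}\}$. Since $H$ is determined by $(N_1, \ldots, N_n)$, it suffices to bound the total number of valid such sequences; the essential quantitative input is that $\sum_i |N_i| = |E(H)| \le \ex(n, C_4) = O(n^{3/2})$.

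The key constraint I will exploit is that $C_4$-freeness forces each $N_i$ to be an independent set in the auxiliary graph $\Gamma_{i-1}$ on $\{v_1, \ldots, v_{i-1}\}$, where two vertices $u, w$ are adjacent iff they share a common neighbor in $H_{i-1} := H[\{v_1, \ldots, v_{i-1}\}]$. Indeed, if $u, w \in N_i$ had a common neighbor $x$ in $H_{i-1}$, then $v_i, u, x, w$ would span a $C_4$ in $H_i$. Crucially, the total number of edges of $\Gamma_{i-1}$ equals the number of cherries in $H_{i-1}$, which by $C_4$-freeness is at most $\binom{n}{2}$, since each pair of vertices of $H$ has at most one common neighbor.

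At step $i$, I would apply Lemma~\ref{lemma:KW-basic} to $\Gamma_{i-1}$ to bound the number of independent sets of size $|N_i|$ by an expression of the form $\binom{i-1}{q_i} \binom{R_i}{|N_i| - q_i}$ for suitably chosen parameters $q_i$ and $R_i$. Taking logarithms and summing over $i$, while using $\sum_i |N_i| \le O(n^{3/2})$, should yield the target bound. The saving here, compared with the naive per-step estimate $\binom{i-1}{|N_i|}$, is precisely the $\log n$ factor provided by Kleitman--Winston, which is what upgrades the trivial bound $O(n^{3/2} \log n)$ coming from $\log_2 \binom{\binom{n}{2}}{\ex(n,C_4)}$ into the desired $O(n^{3/2})$.

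The main obstacle will be verifying the density hypothesis of Lemma~\ref{lemma:KW-basic} for $\Gamma_{i-1}$ sufficiently uniformly across $i$, since the local density of $\Gamma_{i-1}$ is not obviously well controlled by the global cherry bound. I expect this to require either a careful choice of vertex ordering (e.g., by decreasing degree in $H$) or an amortization argument that accounts for how cherries accumulate as $H_{i-1}$ grows. An alternative, and possibly cleaner, route is to instead invoke Lemma~\ref{lemma:KW-containers} in order to produce a family of $2^{O(n^{3/2})}$ \emph{container} edge-sets of size $O(n^{3/2})$ such that every $C_4$-free graph is a subgraph of some container; summing $2^{|C|}$ over containers then yields the same bound.
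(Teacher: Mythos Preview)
Your framework is exactly the paper's: build the graph one vertex at a time, observe that each back-neighbourhood $N_i$ is independent in the square $\Gamma_{i-1}$ of $H_{i-1}$, and bound the number of such independent sets via Lemma~\ref{lemma:KW-basic}. You have also correctly located the only real difficulty, the local density hypothesis on $\Gamma_{i-1}$; what is missing is the specific idea that resolves it.

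That idea is the \emph{degeneracy ordering}: take $v_n$ to be a minimum-degree vertex of $H$, then $v_{n-1}$ a minimum-degree vertex of $H-v_n$, and so on. This guarantees $\delta(H_{i-1})\ge d_i-1$, where $d_i:=|N_i|$. Since $H_{i-1}$ is $C_4$-free, every edge of $\Gamma_{i-1}$ comes from a unique cherry, so $e_{\Gamma_{i-1}}(B)=\sum_{z}\binom{\deg_{H_{i-1}}(z,B)}{2}$ for every $B$; Jensen together with $\sum_z\deg_{H_{i-1}}(z,B)=\sum_{x\in B}\deg_{H_{i-1}}(x)\ge(d_i-1)|B|$ then gives $e_{\Gamma_{i-1}}(B)\ge\frac{(d_i-1)^2}{2(i-1)}\binom{|B|}{2}$ once $|B|\ge 2(i-1)/(d_i-1)$. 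This is exactly the hypothesis of Lemma~\ref{lemma:KW-basic} with $\beta=(d_i-1)^2/\bigl(2(i-1)\bigr)$ and $R=2(i-1)/(d_i-1)$; a short calculation (with $q\approx(\log n)^3$) yields $i(\Gamma_{i-1},d_i)\le\exp(O(\sqrt{i}))$ uniformly in $d_i$, and the product over $i$ is $\exp(O(n^{3/2}))$. Two points worth noting. First, your global cherry bound $e(\Gamma_{i-1})\le\binom{i-1}{2}$ is an \emph{upper} bound and does nothing for the \emph{lower} density bound Lemma~\ref{lemma:KW-basic} needs; the same applies to the container route via Lemma~\ref{lemma:KW-containers}, which also requires a lower bound on $e_G(U)$. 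Second, the paper never uses $\sum_i|N_i|\le\ex(n,C_4)$: the per-step bound $\exp(O(\sqrt{i}))$ already multiplies to $\exp(O(n^{3/2}))$, so the extremal number is a by-product rather than an input. Your suggestion of ordering by decreasing degree in $H$ is close in spirit but does not suffice, since it controls degrees in $H$ rather than in the relevant subgraphs $H_{i-1}$.
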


Before we continue with the proof of the theorem, let us make a few comments. In fact, Erd\H{o}s asked whether $\log_2 f_n(H) = (1+o(1))\ex(n,H)$ for an arbitrary $H$ that contains a cycle. This was shown to be the case by Erd\H{o}s, Frankl, and R\"odl~\cite{ErFrRo86} under the assumption that $\chi(H) \ge 3$. Very recently, Morris and Saxton~\cite{MoSa14} proved that $\log_2 f_n(C_6) \ge 1.0007 \cdot \ex(n,C_6)$ for infinitely many~$n$. But the notoriously difficult problem of determining whether or not $\log_2 f_n(H) = O(\ex(n,H))$ for every bipartite $H$ that is not a forest remains unsolved, apart from the following two special cases: $H$ is a cycle length four~\cite{KlWi82}, six~\cite{KlWi96}, or ten~\cite{MoSa14} or $H$ is an unbalanced complete bipartite graph~\cite{BaSa11-Kmm,BaSa11-Kst}. More exactly, it is proved in~\cite{BaSa11-Kst} and~\cite{MoSa14} that $\log_2 f_n(K_{s,t}) = O(n^{2-1/s})$ whenever $2 \le s \le t$ and that $\log_2 f_n(C_{2\ell}) = O(n^{1+1/\ell})$ for every $\ell \ge 2$, respectively. As it is commonly believed that $\ex(n, K_{s,t}) = \Omega(n^{2-1/s})$ whenever $s \le t$ and that $\ex(n,C_{2\ell}) = \Omega(n^{1+1/\ell})$, both these results are most likely best possible. Finally, we mention that the proofs of most of the results mentioned in this paragraph use either a variant of Lemma~\ref{lemma:KW-basic} or extensions of the ideas presented in Section~\ref{sec:KW} to hypergraphs, see Section~\ref{sec:extensions-to-hypergraphs}.

\begin{proof}[{Proof of Theorem~\ref{thm:KlWi-C4}}]
  Note that one can order the vertices of every $n$-vertex graph $G$ as $v_1, \ldots, v_n$ in such a way that for every $i \in \{2, \ldots, n\}$, letting $G_i = G[\{v_1, \ldots, v_i\}]$,
  \[
  \delta(G_{i-1}) \ge \deg_{G_i}(v_i) - 1.
  \]
  Indeed, one may obtain such an ordering by iteratively letting $v_i$ be a minimum-degree vertex of $G - \{v_{i+1}, \ldots, v_n\}$ for $i = n, \ldots, 2$. In particular, every labeled $n$-vertex graph $G$ can be constructed in the following way. First, choose an ordering $v_1, \ldots, v_n$ of the vertices and let $G_1$ be the empty graph with vertex set~$\{v_1\}$. Second, for each $i \in \{2, \ldots n\}$, build a graph $G_i$ by adding to the graph $G_{i-1}$ a vertex labeled $v_i$ in such a way that its degree $d_i$ (in $G_i$) satisfies $d_i \le \delta(G_{i-1})+1$. Finally, we let $G = G_n$. Observe that $G$ is $C_4$-free if and only if $G_i$ is $C_4$-free for each $i$.

  Now, given integers~$d$ and $i$ with $d \le i$, let $g_i(d)$ denote the maximum number of ways to attach a vertex of degree $d$ to an $i$-vertex $C_4$-free graph with minimum degree at least $d-1$ in such a way that the resulting graph remains $C_4$-free. This number is well defined as clearly $g_i(d) \le \binom{i}{d}$. Moreover, let $g_i = \max \{g_i(d) \colon d \le i\}$. The argument given in the previous paragraph proves that
  \begin{equation}
    \label{eq:fnC4-crude}
    f_n(C_4) \le n! \cdot n! \cdot \prod_{i=2}^n g_{i-1}.
  \end{equation}
  Indeed, there are $n!$ ways to order $[n]$ as $v_1, \ldots, v_n$ and for each such ordering, there are at most $n!$ choices for the sequence $d_2, \ldots, d_n$ of degrees. In view of~\eqref{eq:fnC4-crude}, the following claim easily implies the assertion of the theorem.

  \begin{claim}
    There exists a constant $C$ such that $g_n \le \exp(C\sqrt{n})$ for all $n$.
  \end{claim}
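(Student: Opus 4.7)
The plan is to view the choice of neighbours of the new vertex as selecting an independent set in an auxiliary graph, and then to invoke Lemma~\ref{lemma:KW-basic}. Fix an $n$-vertex $C_4$-free graph $H$ with $\delta(H) \ge d-1$; attaching a new vertex $v$ of degree $d$ while preserving $C_4$-freeness amounts to choosing $d$ vertices $x_1, \ldots, x_d \in V(H)$ such that no two of them share a common $H$-neighbour (otherwise $v, x_i, y, x_j$ would be a $C_4$). Equivalently, $\{x_1, \ldots, x_d\}$ is an independent set in the auxiliary graph $G$ on $V(H)$ in which two vertices are adjacent iff they have a common $H$-neighbour, so $g_n(d) \le i(G, d)$.

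Two consequences of $C_4$-freeness drive the estimation. First, any valid choice has pairwise disjoint neighbourhoods $N_H(x_1), \ldots, N_H(x_d)$, so $d(d-1) \le \sum_i \deg_H(x_i) \le n$ and therefore $d \le \sqrt{n}+1$. Second, because each pair of vertices in $V(H)$ has at most one common $H$-neighbour,
\[
e_G(U) = \sum_{w \in V(H)} \binom{|N_H(w) \cap U|}{2}
\]
for every $U \subseteq V(H)$, and Cauchy--Schwarz combined with $\sum_w |N_H(w) \cap U| = \sum_{u \in U} \deg_H(u) \ge (d-1)|U|$ yields $e_G(U) \ge (d-1)^2 |U|^2/(4n)$ whenever $|U| \ge R := 2n/(d-1)$. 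Hence Lemma~\ref{lemma:KW-basic} is applicable with this $R$ and with $\beta := (d-1)^2/(4n)$.

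The remaining estimation goes by a short case split. For $d \le (n \log n)^{1/3}$, I would use the trivial $g_n(d) \le \binom{n}{d} \le (en/d)^d$, whose logarithm is $O(n^{1/3}(\log n)^{4/3}) = o(\sqrt{n})$. For $(n \log n)^{1/3} < d \le \sqrt{n}+1$, I would apply Lemma~\ref{lemma:KW-basic} with $q = \lceil \log((d-1)/2)/\beta \rceil = \Theta(n \log d / d^2)$, which in this range satisfies $q \le d$ and $R \ge e^{-\beta q} n$, obtaining
\[
g_n(d) \le \binom{n}{q}\binom{R}{d-q} \le \left(\frac{en}{q}\right)^q \left(\frac{eR}{d}\right)^d,
\]
where $\binom{R}{d-q} \le \binom{R}{d}$ follows from $d \le R/2$, itself equivalent to $d(d-1) \le n$. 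The first factor contributes $O(q \log d) = O(n \log^2 d / d^2)$ to the exponent and the second $O(d + d \log(n/d^2))$, and a routine computation shows that each is $O(\sqrt{n})$ uniformly over $d \in ((n \log n)^{1/3}, \sqrt{n}+1]$. Taking the maximum over $d$ then gives $g_n \le \exp(C\sqrt{n})$. The main delicacy is the uniform calibration of $\beta$, $R$, and $q$ so that both binomial factors stay $\exp(O(\sqrt{n}))$ across the whole admissible range; the case split itself is forced by the constraint $q \le d$ in Lemma~\ref{lemma:KW-basic}, since for very small $d$ there are simply too few iterations of the Kleitman--Winston algorithm for the local density bound to bite, and the crude $\binom{n}{d}$ estimate takes over.
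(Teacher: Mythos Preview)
Your argument is correct and essentially identical to the paper's: the same auxiliary ``square'' graph, the same density estimate with $R = 2n/(d-1)$ and $\beta \asymp d^2/n$, and the same appeal to Lemma~\ref{lemma:KW-basic}, differing only in the case-split threshold (the paper uses $\sqrt{n}/\log n$ rather than $(n\log n)^{1/3}$) and in the choice of $q$ (the paper takes a $d$-independent $q = \lceil 3(\log n)^3\rceil$). One small wrinkle worth patching: at your threshold $d \approx (n\log n)^{1/3}$ your $q \approx \tfrac{4}{3}d$ slightly exceeds $d$, so the hypothesis $m \ge q$ of Lemma~\ref{lemma:KW-basic} (with $m = d$) fails there; shifting the cutoff up by a constant factor, or taking $\beta = (d-1)^2/(2n)$ as the paper does, fixes this.
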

  
  Without loss of generality, we may assume that $n$ is large. Thus, if $d \le \sqrt{n} / \log n$, then
  \[
  g_n(d) \le \binom{n}{d} \le \binom{n}{\frac{\sqrt{n}}{\log n}} \le \left(e\sqrt{n}\log n\right)^{\frac{\sqrt{n}}{\log n}} \le \exp(\sqrt{n}).
  \]
  Therefore, we shall from now on assume that $d > \sqrt{n} / \log n$. Let $G$ be a $C_4$-free graph on $n$ vertices with $\delta(G) \ge d-1$. Let $H$ be the square of $G$, that is, the graph with $V(H) = V(G)$ and
  \[
  E(H) = \{xy \colon xz, yz \in E(G) \text{ for some $z \in V(G)$}\}.
  \]
  Crucially, observe that adding $v$ to $G$ will result in a $C_4$-free graph if and only if the neighbourhood of $v$ is an independent set in $H$. Hence, $i(H,d)$ is an upper bound on the number of $C_4$-free extensions of $G$ by a vertex of degree $d$. We shall estimate $i(H, d)$ using Lemma~\ref{lemma:KW-basic}.

  To this end, we show that subgraphs of $H$ induced by large subsets of $V(H)$ have reasonably high density. Since $G$ is $C_4$-free, every edge $xy$ of $H$ corresponds to a unique vertex $z \in V(G)$ such that $xz$ and $yz$ are edges of $G$. Therefore, for each $B \subseteq V(H)$,
  \[
  e_H(B) = \sum_{z \in V(G)} \binom{\deg_G(z,B)}{2} \ge n \cdot \binom{\sum_z \deg(z,B) / n}{2},
  \]
  where the last inequality is Jensen's inequality applied to the convex function $x \mapsto \binom{x}{2}$. Since
  \[
  \sum_{z \in V(G)}\deg_G(z,B) = \sum_{x \in B}\deg_G(x) \ge |B| \cdot \delta(G) \ge (d-1)|B|,
  \]
  then assuming that $|B| \ge \frac{2n}{d-1}$ implies
  \[
  e_H(B) \ge n \cdot \frac{(d-1)|B|}{2n} \left(\frac{(d-1)|B|}{n} - 1\right) \ge \frac{(d-1)^2}{2n}\binom{|B|}{2}.
  \]

  Finally, let $R = \frac{2n}{d-1}$, $\beta = \frac{(d-1)^2}{2n}$, and $q = \lceil 3(\log n)^3\rceil$. Since $d > \sqrt{n}/\log n$ and $n$ is large, then $\beta q \ge \log n$ and therefore $e^{-\beta q} n \le 1 \le R$. If follows from Lemma~\ref{lemma:KW-basic} that
  \[
  i(H, d) \le \binom{n}{q} \binom{\frac{2n}{d-1}}{d-q} \le e^{4\log^4n} \cdot \left(\frac{2en}{(d-q)^2}\right)^{d-q} \le \sup_{k > 0}  \left(\frac{e\sqrt{n}}{k}\right)^{2k} = e^{2\sqrt{n}},
  \]
  where we used the assumption that $n$ is large and the fact that $\sup\left\{\left(\frac{e}{x}\right)^x \colon x > 0\right\} = e$.
\end{proof}

\subsection{Roth's theorem in random sets}

As our final example, we present a short proof of a well-known result of Kohayakawa, \L uczak, and R\"odl~\cite{KoLuRo96}. Recall that $[n]$ denotes the set $\{1, \ldots, n\}$. A famous theorem of Roth~\cite{Ro53} asserts that for every positive $\delta$, any set of at least $\delta n$ integers from~$[n]$ contains a $3$-term arithmetic progression ($3$-term AP), provided that $n$ is sufficiently large (as a~function of $\delta$ only). Given a positive $\delta$, we shall say that a set $A \subseteq \ZZ$ is \emph{$\delta$-Roth} if each $B \subseteq A$ satisfying $|B| \ge \delta |A|$ contains a $3$-term AP. We may now restate Roth's theorem as follows. For every positive $\delta$, there exists an $n_0$ such that the set $[n]$ is $\delta$-Roth whenever $n \ge n_0$. With the aim of showing that there exist `smaller' and `sparser' $\delta$-Roth sets Kohayakawa, \L uczak, and R\"odl~\cite{KoLuRo96} proved the following result.

\begin{thm}[{\cite{KoLuRo96}}]
  \label{thm:KoLuRo}
  For every positive $\delta$, there exists a constant $C$ such that if $C\sqrt{n} \le m \le n$, then the probability that a uniformly chosen random $m$-element subset of $\{1, \ldots, n\}$ is $\delta$-Roth tends to~$1$ as $n \to \infty$.
\end{thm}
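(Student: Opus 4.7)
The plan is a container-style union bound: first reduce the probabilistic statement to a counting estimate for 3-AP-free subsets of $[n]$, then obtain that estimate by adapting Lemma~\ref{lemma:KW-basic} to the $3$-uniform structure of $3$-APs. The preliminary ingredient is Varnavides' supersaturation theorem, provable from Roth's theorem by a standard averaging argument: there exists $\alpha = \alpha(\delta) > 0$ such that, for $n$ sufficiently large, every $B \subseteq [n]$ with $|B| \ge (\delta/4)n$ contains at least $\alpha n^2$ three-term APs.

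Next, construct a family $\mathcal{F}$ of containers such that every 3-AP-free $B \subseteq [n]$ satisfies $B \subseteq S \cup F$ for some $F \in \mathcal{F}$ with $|F| \le (\delta/4)n$ and some $S \subseteq B$ with $|S| \le q$, where $|\mathcal{F}| \le \binom{n}{q}$ and $q = O_\delta(\sqrt n)$. The construction mimics the Kleitman--Winston algorithm on an auxiliary link graph $L_S$ over a working set $A \subseteq [n] \setminus S$, where $\{x, y\}$ is an edge of $L_S$ if and only if $\{x, y, s\}$ is a $3$-AP for some $s \in S$; since $B$ is 3-AP-free, $B \cap A$ is independent in $L_S$. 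At each iteration, first append a new vertex $t \in A \cap B$ to $S$ (which by supersaturation contributes $\Omega(n)$ fresh edges to $L_S$ within $A$), then execute one step of max-degree removal from $A$. After $s$ iterations, $L_S$ has $\Omega(sn)$ edges inside $A$, so the max-degree vertex at step $s$ has degree $\Omega(s)$ and contributes $\Omega(s)$ removals; summed over $s = 1, \ldots, q$, the total shrinkage is $\Omega(q^2)$, which brings $|A|$ below $(\delta/4)n$ once $q = O_\delta(\sqrt n)$. Finally, for a random $m$-subset $A \subseteq [n]$ to fail the $\delta$-Roth property, some $F \in \mathcal{F}$ must satisfy $|A \cap F| \ge \delta m - q$; since $|F| \le (\delta/4) n$, the hypergeometric Chernoff inequality gives $\Pr[|A \cap F| \ge \delta m - q] \le \exp(-c_\delta m)$ when $m$ dominates $q$, and a union bound yields
\[
\Pr[A \text{ is not } \delta\text{-Roth}] \le \binom{n}{q} \exp(-c_\delta m) \le \exp\bigl(O_\delta(\sqrt{n}\, \log n) - c_\delta m\bigr),
\]
which tends to $0$ for $m \ge C \sqrt n$ with $C = C(\delta)$ large (up to a $\log n$ factor that can be shaved by refining the container count from $\binom{n}{q}$ to the sharper $2^{O_\delta(\sqrt n)}$).

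The principal obstacle is the container construction of the preceding paragraph. Lemma~\ref{lemma:KW-basic} is stated for graphs, whereas $3$-APs naturally form a $3$-uniform hypergraph; in isolation, the link graph $L_S$ is too sparse (with only $\Theta(|S| \cdot n)$ edges) for a single invocation of the lemma to yield useful containers. The remedy is to interleave vertex revelations (each growing $S$ and thickening $L_S$) with Kleitman--Winston-style removals (each shrinking $A$ and concentrating the $L_S$-edges inside it), using the supersaturation bound at every step to certify that the interleaved process makes quantifiable progress until $|A| \le (\delta/4)n$.
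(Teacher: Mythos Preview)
Your overall strategy (supersaturation, link graph, a Kleitman--Winston-style fingerprint, then a union bound) is the right one and matches the paper's in spirit, but the heart of your container construction contains an unjustified step. You assert that each vertex $t \in A \cap B$ appended to $S$ ``by supersaturation contributes $\Omega(n)$ fresh edges to $L_S$ within $A$''. Supersaturation only tells you that the \emph{current} set $A$ (while $|A| \ge (\delta/4)n$) contains $\alpha n^2$ three-term APs, i.e.\ that the \emph{average} vertex of $A$ has hypergraph degree $\Theta(n)$. It says nothing about the degree of a vertex $t$ that you are forced to pick from the $3$-AP-free set $B$: such a $t$ may lie in very few $3$-APs inside $A$, so its contribution to $L_S$ may be negligible. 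Consequently the claim ``after $s$ iterations, $L_S$ has $\Omega(sn)$ edges inside $A$'' is unfounded, and with it the $\Omega(q^2)$ shrinkage estimate collapses. The paper confronts exactly this obstacle and resolves it by first isolating the set $Z \subseteq B$ of \emph{high-degree} vertices in the $3$-AP hypergraph on the current container $B$ (supersaturation guarantees $|Z| = \Omega(n)$), and then splitting into two cases: if the $3$-AP-free set $I$ meets $Z$ in fewer than $\sqrt{n}$ points, one simply passes to $B \setminus Z$; otherwise one selects the seed set $W$ entirely from $I \cap Z$, so that every $w \in W$ genuinely has $\Omega(n)$ link edges, and only then applies Lemma~\ref{lemma:KW-containers} to the link graph $G_W$. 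A further robustness step (bounding $\Delta(G_W) \le 3|W|$ to show $G_W$ keeps $\Omega(n|W|)$ edges even after deleting $\Omega(n)$ vertices) is also needed and absent from your sketch.

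A secondary issue: as you yourself note, your union bound only yields the conclusion for $m \ge C\sqrt{n}\log n$ because $\binom{n}{q}$ with $q = \Theta(\sqrt{n})$ is $\exp(\Theta(\sqrt{n}\log n))$, and the promised refinement to $2^{O_\delta(\sqrt{n})}$ is not explained. The paper sidesteps this entirely by first proving the \emph{counting} statement $a(n,m) \le \binom{\eps n}{m}$ (Theorem~\ref{thm:3AP-free-count}) via a bounded number of recursive applications of the container step, and then deducing Theorem~\ref{thm:KoLuRo} by a direct first-moment bound over $3$-AP-free sets of size $\lceil \delta m \rceil$; the ratio $\binom{\eps n}{\lceil \delta m \rceil} (m/n)^{\lceil \delta m \rceil}$ absorbs the fingerprint cost without any logarithmic loss.
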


We shall deduce Theorem~\ref{thm:KoLuRo} as an easy corollary of the following upper bound for the number of subsets of $[n]$ of a given cardinality that do not contain a $3$-term AP, originally proved in~\cite{BaMoSa14} and~\cite{SaTh14} in a much more general form. This upper bound will be derived from Roth's theorem using Lemma~\ref{lemma:KW-containers} with one additional twist which was previously considered in~\cite{AlBaMoSa14-AG}.

\begin{thm}
  \label{thm:3AP-free-count}
  For every positive $\eps$, there exists a constant $D$ such that if $D\sqrt{n} \le m \le n$,
  \[
  \left|\big\{A \subseteq [n] \colon \text{$|A| = m$ and $A$ contains no $3$-term AP}\big\}\right| \le \binom{\eps n}{m}.
  \]
\end{thm}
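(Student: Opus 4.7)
The plan is to combine Roth's theorem, in a supersaturated form, with Lemma~\ref{lemma:KW-containers} applied to an auxiliary graph that encodes 3-term APs through a randomly chosen ``fingerprint'' subset; this mirrors the approach of~\cite{AlBaMoSa14-AG}.

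First, I would prove a supersaturation consequence of Roth's theorem: for each $\alpha > 0$ there exists $\zeta = \zeta(\alpha) > 0$ such that, for all sufficiently large $n$, every set $U \subseteq [n]$ with $|U| \ge \alpha n$ contains at least $\zeta n^2$ three-term APs. This is standard, via averaging over long arithmetic sub-progressions of $[n]$ on which $U$ has density at least $\alpha/2$ and applying Roth inside each.

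Next, for each $F \subseteq [n]$ define the auxiliary graph $G_F$ on vertex set $[n]$ whose edges are the pairs $\{x,y\}$ such that $\{x,y,z\}$ is a 3-term AP for some $z \in F$. The crucial property is that if $A$ is 3-AP-free and $F \subseteq A$, then $A \setminus F$ is independent in $G_F$. The ``twist'' is that I would choose $F$ uniformly at random from $\binom{[n]}{f}$ for an appropriate $f$, and use supersaturation together with Chernoff's inequality and a union bound (over the $2^n$ possible subsets $U$) to show that, with positive probability, $G_F$ satisfies the density hypothesis of Lemma~\ref{lemma:KW-containers} on every $U \subseteq [n]$ with $|U| \ge \eps n/2$, for a constant $D = D(\eps)$.

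Fixing a good $F^*$ and applying Lemma~\ref{lemma:KW-containers} with $R = \eps n/2$ yields a family of containers of size at most $R$ capturing every independent set of $G_{F^*}$, and hence every 3-AP-free $A \supseteq F^*$. Summing over pairs $(F^*, A)$ with $F^* \subseteq A$, $|F^*| = f$, each 3-AP-free $A$ of size $m$ being counted $\binom{m}{f}$ times, gives an upper bound of the form $\binom{n}{f}\binom{n}{q}\binom{R}{m-f-q}/\binom{m}{f}$, which must be shown to be at most $\binom{\eps n}{m}$. The main obstacle is the parameter balancing: $G_{F^*}$ must be dense enough for Lemma~\ref{lemma:KW-containers} to produce a container of size $R \le \eps n$ (pushing $f$ up), while the double-counting factor $\binom{n}{f}/\binom{m}{f} \sim (n/m)^f$ must stay tame, which for $m$ as small as $D\sqrt{n}$ pushes $f$ down. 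Reconciling these opposing demands in a single shot is delicate; the cleanest remedy — echoing the discussion after Theorem~\ref{thm:Sapozhenko} — is to iterate Lemma~\ref{lemma:KW-containers} over $\Theta(\log(1/\eps))$ rounds, shrinking the current container by a constant factor per round while adding only $O(\sqrt{|C|})$ fingerprint vertices, so that the total fingerprint is $O(\sqrt{n})$ and the resulting count of trajectories is absorbed by the hypothesis $m \ge D\sqrt{n}$.
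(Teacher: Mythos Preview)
Your overall architecture---supersaturation, the auxiliary graph $G_F$ encoding $3$-APs through a fingerprint $F\subseteq A$, Lemma~\ref{lemma:KW-containers}, and an iteration to reach container size $\eps n/2$---matches the paper's proof. The specific mechanism you propose for selecting $F$, however, does not work.

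The difficulty is the union bound over $2^n$ sets $U$. For random $F\in\binom{[n]}{f}$ one has $e_{G_F}(U)=\sum_{z\in F}T_z$, where $T_z$ is the number of pairs in $U$ completed to an AP by $z$; each $T_z$ may be as large as $\Theta(|U|)=\Theta(n)$, so Hoeffding gives deviation probability only $\exp\bigl(-\Theta(t^2/(fn^2))\bigr)$. Beating $2^{-n}$ forces $t=\Omega(\sqrt{f}\,n^{3/2})$, while $\mathbb{E}[e_{G_F}(U)]=\Theta(\zeta f n)$; these are compatible only for $f=\Omega(n)$, far too large for the double-counting factor $\binom{n}{f}/\binom{m}{f}$ to be absorbed when $m\asymp\sqrt{n}$. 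There is also a logical slip: exhibiting \emph{one} good $F^\ast\in\binom{[n]}{f}$ does not permit the sum over pairs $(F^\ast,A)$ with $F^\ast\subseteq A$---you would need a good $F^\ast$ inside each $A$, which your random argument does not supply.

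The paper avoids both problems by choosing the fingerprint \emph{deterministically and adaptively}. Inside the current set $B$ (of size $n'$) it first isolates the set $Z$ of vertices lying in at least $\beta n$ three-term APs of $B$; supersaturation forces $|Z|\ge\beta n$. Sets $A$ with $|A\cap Z|<\sqrt{n}$ are counted separately (they essentially live in $B\setminus Z$). For the remaining $A$ one takes $W$ to be any $\lfloor\sqrt{n}\rfloor$-subset of $A\cap Z$. Since every $z\in W$ has AP-degree at least $\beta n$ in $B$ and $\Delta(G_W)\le 3|W|$, a crude edge-deletion bound gives $e_{G_W}(U)\ge\beta n|W|/12$ for every $U$ with $|B\setminus U|\le\beta n/12$---no concentration needed. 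Lemma~\ref{lemma:KW-containers} is then applied with $R=n'-\lceil\beta n/12\rceil$, so each round shrinks $n'$ by the fixed amount $\Theta(\beta n)$ (not a constant factor); after $K=O(1/\beta)$ rounds one reaches $\eps n/2$. The idea your plan is missing is exactly this: restrict the fingerprint to high-AP-degree vertices of the current container so that the density hypothesis of Lemma~\ref{lemma:KW-containers} holds \emph{deterministically} on all $U$ close to $B$, which is all the lemma requires to peel off a thin layer.
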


\begin{proof}[Proof of Theorem~\ref{thm:KoLuRo}]
  Fix a positive $\delta$, let $\eps = \delta / 6$, and let $D$ be the constant from the statement of Theorem~\ref{thm:3AP-free-count}. Let $C = D / \delta$ and suppose that $C \sqrt{n} \le m \le n$. Since $\lceil \delta m \rceil \ge D \sqrt{n}$, Theorem~\ref{thm:3AP-free-count} implies that the set $\cA$ defined by
  \[
  \cA = \big\{ A \subseteq [n] \colon \text{$|A| = \lceil \delta m \rceil$ and $A$ contains no $3$-term AP}\big\}
  \]
  has at most $\binom{\eps n}{\lceil \delta m \rceil}$ elements. Now, let $R$ be an $m$-element subset of $[n]$ chosen uniformly at random. Clearly,
  \[
  \begin{split}
    \Pr\big(\text{$R$ is not $\delta$-Roth}\big) & = \Pr\big(\text{$R \supseteq A$ for some $A \in \cA$}\big) \le \sum_{A \in \cA} \Pr(R \supseteq A) \le \sum_{A \in \cA} \left(\frac{m}{n}\right)^{|A|} \\
    & = |\cA| \cdot \left(\frac{m}{n}\right)^{\lceil \delta m\rceil} \le \binom{\eps n}{\lceil \delta m \rceil} \cdot \left(\frac{m}{n}\right)^{\lceil \delta m\rceil} \le \left(\frac{\eps e n}{\lceil \delta m \rceil} \cdot \frac{m}{n} \right)^{\lceil \delta m \rceil} \le 2^{-\delta m}.\qedhere
  \end{split}
  \]
\end{proof}

Our proof of Theorem~\ref{thm:3AP-free-count} will use the following simple consequence of Roth's theorem, observed first by Varnavides~\cite{Va59}, as a `black box'.

\begin{prop}[{\cite{Ro53,Va59}}]
  \label{prop:Varnavides}
  For every positive $\delta$, there exist an integer $n_0$ and a positive $\beta$ such that if $n \ge n_0$, then every set of at least $\delta n$ integers from $\{1, \ldots, n\}$ contains at least $\beta n^2$ $3$-term APs.
\end{prop}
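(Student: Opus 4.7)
The plan is to derive Proposition~\ref{prop:Varnavides} from Roth's theorem by the standard averaging (``supersaturation'') argument. First, I would invoke Roth's theorem itself with density parameter $\delta/2$ to obtain an integer $N = N(\delta)$ such that every subset of $\{1, \ldots, N\}$ with at least $(\delta/2) N$ elements contains a $3$-term arithmetic progression. Fix this $N$, and assume that $n$ is sufficiently large compared to $N$; let $A \subseteq [n]$ satisfy $|A| \ge \delta n$.

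Next, let $\cP$ denote the family of all length-$N$ arithmetic progressions contained in $[n]$. A short count gives $|\cP| \sim \frac{n^2}{2(N-1)}$, and a direct calculation shows that each $x \in [n]$ lies in the same number of members of $\cP$ up to an additive $O(N^2)$ boundary correction. Summing $|A \cap P|$ over $P \in \cP$ and using $|A| \ge \delta n$, the average value of $|A \cap P|/N$ is therefore at least $\delta - o_n(1)$. Since $|A \cap P|/N \le 1$, a Markov-type averaging step then shows that a positive fraction $\beta_0 = \beta_0(\delta) > 0$ of the progressions in $\cP$ are ``dense'', meaning $|A \cap P| \ge (\delta/2) N$. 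For each such $P$, Roth's theorem applied to $A \cap P$ (viewed as a subset of the $N$-element arithmetic progression $P$) yields a non-degenerate $3$-term AP inside $A$.

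Finally, a double-counting step converts this into the desired lower bound. Each non-degenerate $3$-term AP $\{a, a+c, a+2c\}$ in $[n]$ is contained in at most $O(N^2)$ members of $\cP$: specifying the common difference $e$ of an enclosing $P$ forces $e \mid c$ with $c/e \le (N-1)/2$, giving at most $N/2$ choices of $e$, and then at most $N$ choices for the position of $a$ within $P$ (after which the starting point of $P$ is determined). Hence the number of $3$-term APs in $A$ is at least $\beta_0 |\cP| / O(N^2) = \Omega(n^2)$, which is the desired conclusion with some $\beta = \beta(\delta) > 0$.

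There is no serious obstacle here, only routine bookkeeping. The main points requiring care are the boundary effect in the averaging step (a lower-order contribution, absorbed by taking $n$ large in terms of $N(\delta)$), the verification that the $3$-term AP produced in each dense $P$ is genuinely non-degenerate (automatic once $N$ is large enough relative to $\delta^{-1}$), and the divisor count in the last step, which is clean because the ratio $c/e$ is bounded by $N/2$ regardless of how many divisors $c$ happens to have.
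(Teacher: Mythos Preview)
The paper does not actually prove Proposition~\ref{prop:Varnavides}; it merely states it with references to Roth~\cite{Ro53} and Varnavides~\cite{Va59} and uses it as a black box in the proof of Theorem~\ref{thm:3AP-free-count}. Your proposal is precisely the classical Varnavides averaging argument and is correct, including the handling of the boundary terms, non-degeneracy, and the $O(N^2)$ overcount in the final double-counting step.
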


\begin{proof}[Proof of Theorem~\ref{thm:3AP-free-count}]
  Fix a positive $\eps$, let $n_0$ and $\beta$ be the constants from the statement of Proposition~\ref{prop:Varnavides} invoked with $\delta = \eps / 2$, and suppose that $n \ge n_0$. Given an arbitrary set $B \subseteq [n]$ and integers~$m$ and $\np$, let
  \begin{align*}
    a(B,m) & = \left|\big\{ I \subseteq B \colon \text{$|I| = m$ and $I$ contains no $3$-term AP}\big\}\right|, \\
    a(\np,m) & = \max \big\{ a(B,m) \colon \text{$B \subseteq [n]$ with $|B| = \np$} \big\}.
  \end{align*}
  Our aim is to show that $ a([n],m) = a(n,m)\le \binom{\eps n}{m}$, provided that $m \ge C \sqrt{n}$ for some constant $C$ which depends only on $\eps$. This inequality will follow from the trivial observation that $a(\np,m) \le \binom{\np}{m}$ for all $\np$ and $m$ and the following claim.

  \begin{claim}
    If $\np \ge \eps n / 2$ and $m \ge 2\lfloor\sqrt{n}\rfloor$, then $a(\np,m) \le 2 \binom{n}{\lfloor\sqrt{n}\rfloor}^2 \cdot a\big(\np - \lceil \beta n/12 \rceil, m-2\lfloor\sqrt{n}\rfloor\big)$.
  \end{claim}

  Let $\HH$ be the $3$-uniform hypergraph with vertex set $[n]$ whose edges are all triples of numbers which form a $3$-term AP. Let $B$ be an arbitrary $\np$-element subset of $[n]$. By Proposition~\ref{prop:Varnavides}, $e_{\HH}(B) \ge \beta n^2$. Let $Z \subseteq B$ be the set of all vertices of $\HH[B]$, the subhypergraph of $\HH$ induced by~$B$, whose degree is at least $\beta n$. In other words, $Z$ is the set of all numbers in $B$ that belong to at least $\beta n$ three-term APs contained in $B$. Since the maximum degree of $\HH$ is at most $2n$, we have $|Z| \ge \beta n$.

  We first estimate the number of $m$-element subsets of $B$ with no $3$-term AP that contain fewer than $\sqrt{n}$ elements of $Z$. Since each such set $A$ may be partitioned into $A_1$ and $A_2$, where $|A_1| = \lfloor \sqrt{n} \rfloor$ and $A_2 \subseteq B \setminus Z$, there are at most $\binom{n}{\lfloor \sqrt{n} \rfloor} \cdot a(\np - \lceil \beta n \rceil, m - \lfloor \sqrt{n} \rfloor)$ such sets. We may therefore focus on counting subsets of $B$ that contain at least $\sqrt{n}$ elements of $Z$. We shall obtain a suitable upper bound for their number using Lemma~\ref{lemma:KW-containers}.

  Let $W$ be an arbitrary subset of $Z$ and consider the auxiliary graph $G_W$ with vertex set $B$ whose edges are all pairs $\{x,y\}$ such that $\{x,y,z\} \in \HH$ for some $z \in W$. Since for a given pair $\{x, y\} \subseteq [n]$, there are at most three different $z$ such that $\{x,y,z\} \in \HH$, it follows that $e(G_W) \ge |W| \beta n/3$ and the maximum degree of $G_W$ is no more than $3|W|$. It follows that for an arbitrary subset $U$ of $B$ with at least $\np - \beta n /12$ elements,
  \begin{equation}
    \label{eq:eGW-U}
    e_{G_W}(U) \ge e(G_W) - |B \setminus U| \cdot \Delta(G_W) \ge \frac{\beta n |W|}{3} - \frac{\beta n}{12} \cdot 3|W| = \frac{\beta n |W|}{12}.
  \end{equation}
  Observe crucially that if some set $I \cup W$ contains no $3$-term APs, then $I$ is an independent set in the graph $G_W$.

  Let $w = \lfloor \sqrt{n} \rfloor$ and fix some $W \subseteq Z$ with $|W| = w$. We shall prove an upper bound on the~number of ways one can extend $W$ to an $m$-element subset of $B$ that contains no $3$-term APs. By our above discussion, if $I \cup W$ is such a set, then $I$ is an independent set of $G_W$ with $m-w$ elements. Let $\cS$ be the family of sets and let $f$ and $g$ be the maps whose existence is postulated by Lemma~\ref{lemma:KW-containers} with $G = G_W$, $q = \lfloor \sqrt{n} \rfloor$, $R = n' - \lceil \beta n /12 \rceil$, and $D = \beta w / 6$. Note that the assumptions of the lemma are satisfied by our discussion above, see~\eqref{eq:eGW-U}.
  Since clearly for each extension $I$ of $W$ to an $m$-element subset of $B$ with no $3$-term APs, $I \cap f(g(I))$ contains no $3$-term APs, the~number $E_W$ of extensions of $W$ satisfies
  \[
  E_W \le \sum_{S \in \cS} a\big(f(S), m-w-q\big) \le \binom{n}{q} \cdot a\big(R, m-w-q\big).
  \]

  We conclude that
  \[
  \begin{split}
    a(B,m) & \le \binom{n}{\lfloor\sqrt{n}\rfloor} \cdot a\big(\np - \lceil \beta n \rceil, m - \lfloor\sqrt{n}\rfloor\big) + \sum_{W \subseteq Z \colon |W| = w} E_W \\
    & \le \binom{n}{\lfloor\sqrt{n}\rfloor}^2 \cdot a\big(\np - \lceil \beta n \rceil, m - 2\lfloor\sqrt{n}\rfloor\big) + \binom{n}{w}\binom{n}{q} \cdot a\big(\np-\lceil \beta n /12 \rceil, m-2\lfloor\sqrt{n}\rfloor\big) \\
    & \le 2 \binom{n}{\lfloor\sqrt{n}\rfloor}^2 \cdot a\big(\np - \lceil \beta n /12 \rceil, m - 2\lfloor \sqrt{n} \rfloor\big),
  \end{split}
  \]
  which, since $B$ was an arbitrary $\np$-element subset of $[n]$, proves the claim.

  \medskip

  Let $K = \lceil (12-6\eps)/\beta \rceil$ and suppose that $m \ge \sqrt{n}$. We recursively invoke the claim $K$ times to obtain
  \begin{equation}
    \label{eq:anm-unprocessed}
    a(n,m) \le 2^K \binom{n}{\lfloor \sqrt{n} \rfloor}^{2K} \binom{\eps n/2}{m-2K\lfloor \sqrt{n} \rfloor} \le 2^K \binom{2Kn}{2K\lfloor \sqrt{n} \rfloor} \binom{\eps n/2}{m-2K\lfloor \sqrt{n} \rfloor}.
  \end{equation}
  As in the proof of Theorem~\ref{thm:eigenvalue}, denote by $r(t)$ the right-hand side of~\eqref{eq:anm-unprocessed} with $2K \lfloor \sqrt{n} \rfloor$ replaced by~$t$. We may clearly assume that $m < \eps n/4$ as otherwise $a(n,m) = 0$ by Roth's theorem (we may assume that $n$ is sufficiently large). An~elementary calculation shows that
  \[
  \frac{r(t+1)}{r(t)} = \frac{2Kn-t}{t+1} \cdot \frac{m - t}{\eps n /2 -m + t + 1} \le \frac{2Knm}{(t+1)(\eps n / 2 - m)} \le \frac{8Km}{\eps(t+1)}
  \]
  and hence, letting $T = 2K \lfloor \sqrt{n} \rfloor$,
  \[
  a(n,m) \le r(T) \le 2^K \cdot \frac{(8Km)^T}{\eps^T T!} \cdot \binom{\eps n/2}{m} \le 2^K \cdot \left(\frac{8eKm}{\eps T}\right)^T \cdot \left(\frac{1}{2}\right)^m \binom{\eps n}{m}.
  \]
  Finally, if $D$ is sufficiently large as a function of $K$ and $\eps$, then for every $m$ with $m \ge D\sqrt{n} \ge D/(2K) \cdot T$, we have
  \[
  2^{K/m} \cdot \left(\frac{8eKm}{\eps T}\right)^{T/m} \le 2,
  \]
  which completes the proof of the theorem.
\end{proof}

\section{Concluding remarks and further reading}

\subsection{Other applications of the Kleitman--Winston method}

There have been quite a few successful applications of the Kleitman--Winston method other than the ones presented in Section~\ref{sec:applications}. In particular, variants of Lemma~\ref{lemma:KW-basic} were used in the following works: Kleitman and Wilson~\cite{KlWi96} proved that the number of $n$-vertex graphs with girth larger than $2\ell$ is $2^{O(n^{1+1/\ell})}$; Dellamonica, Kohayakawa, Lee, R\"odl, and the author~\cite{DeKoLeRoSa14-B3, DeKoLeRoSa14-Bh, KoLeRoSa14} proved sharp bounds on the number of subsets of $[n]$ with a given cardinality which contain no non-trivial solutions to the equation $a_1 + \ldots + a_h = b_1 + \ldots + b_h$ for every $h \ge 2$; Balogh, Das, Delcourt, Liu, and Sharifzadeh~\cite{BaDaDeLiSh14} and Gauy, H\`an, and Oliveira~\cite{GaHaOl14} proved sharp bounds for the number of intersecting families of $k$-element subsets of $[n]$ with a given cardinality and for the typical size of the largest intersecting subfamily contained in a random collection of $k$-element subsets of $[n]$.

\subsection{Extensions of the Kleitman--Winston method to hypergraphs}

\label{sec:extensions-to-hypergraphs}

It seems natural to seek a generalisation of the Kleitman--Winston method that would yield non-trivial upper bounds for the number of independent sets in a hypergraphs of higher uniformity. Perhaps somewhat surprisingly, such generalisations were considered only fairly recently. To the best of our knowledge this was first done in~\cite{BaSa11-Kmm,BaSa11-Kst}, where sharp upper bounds for the number of $n$-vertex graphs which do not contain a copy of a fixed complete bipartite subgraph were proved using a generalisation of the argument presented in Section~\ref{sec:number-C4-free-graphs}. Around the same time, similar ideas were developed by Saxton and Thomason, who used them to establish lower bounds for the list chromatic number of regular uniform hypergraphs~\cite{SaTh12}. Inspired by the groundbreaking work of Conlon and Gowers~\cite{CoGo14} and Schacht~\cite{Sc14}, these efforts culminated in far-reaching generalisations of the Kleitman--Winston method to arbitrary uniform hypergraphs, obtained independently by Saxton and Thomason~\cite{SaTh14}, and by Balogh, Morris, and the author~\cite{BaMoSa14}. For further details, we refer the interested reader to~\cite{BaMoSa14, Co14, CoGo14, Sa14, SaTh14, Sc14}.

\bigskip
\noindent
\textbf{Acknowledgments.} I would like to thank Noga Alon, J\'ozsi Balogh, Domingos Dellamonica, Yoshi Kohayakawa, Sang June Lee, Rob Morris, and Vojta R\"odl for many interesting discussions on the topics of independent sets in graphs and the Kleitman--Winston method and its applications over the past several years. These discussions have greatly influenced the content of this paper. I would also like to thank David Conlon, Asaf Ferber, and Rob Morris for their careful reading of an earlier version of this manuscript and many valuable comments which helped me improve the exposition and saved me from making several embarrassing mistakes. Finally, special thanks to Jarik Ne\v{s}et\v{r}il for his encouragement to write this survey.

\bibliographystyle{amsplain}
\bibliography{KW-survey}

\end{document}